\newcommand{\R}{\mathbb{R}}
\newcommand{\N}{\mathbb{N}}
\crefname{hypothesis}{Hypothesis}{Hypotheses}
\title{Global uniqueness and Lipschitz-stability for the inverse  Robin transmission  problem%
\thanks{{\bf Funding:} For the second author, this work was funded by the German Academic Exchange Service (DAAD)}}
\author{B.  Harrach\thanks{Department of Mathematics, Goethe University Frankfurt, Germany 
  \email{harrach@math.uni-frankfurt.de}.}
\and H.  Meftahi\thanks{Department of Mathematics, ENIT of Tunisia
  \email{houcine.meftahi@enit.utm.tn}.}}
\def\norm#1{\hspace{0.2ex} \|#1\| \hspace{0.2ex}} 
\begin{document}
\maketitle

\let\thefootnote\relax\footnotetext{\hrule \vspace{1ex} \centering This is a preprint version of a journal article published in\\
 \emph{SIAM J.\ Appl.\ Math.} \textbf{79}(2), 525--550, 2019
(\url{https://doi.org/10.1137/18M1205388}).
}

\begin{abstract}
In this paper, we consider the inverse problem of detecting a corrosion coefficient between two layers of a conducting medium from the Neumann-to-Dirichlet map. 
This inverse problem is motivated by the description of the index of corrosion in non-destructive testing. We show a monotonicity estimates between the Robin coefficient and the Neumann-to-Dirichlet operator. We prove a global uniqueness result and Lipschitz stability estimate, and show how to quantify the Lipschitz stability constant for a given setting. 

Our quantification of the Lipschitz constant does not rely on quantitative unique continuation or analytic estimates of special functions.
Instead of deriving an analytic estimate, we show that the Lipschitz constant for a given setting can be explicitly calculated from the a priori data by solving finitely many well-posed PDEs. Our arguments rely on standard (non-quantitative) unique continuation, a Runge approximation property, the monotonicity result and the method of localized  potentials. 

To solve the problem numerically, we reformulate the inverse problem into a minimization problem using a least square functional.
The reformulation of the minimization problem as a suitable saddle point problem
allows us to obtain the optimality conditions by using differentiability properties of the min-sup formulation. The reconstruction is then performed by means of the BFGS algorithm. Finally, numerical results are presented  to illustrate the efficiency of the proposed 
 alogorithm.
 \end{abstract}
\begin{keywords}
  Inverse problems, Robin  coefficient,  Monotonicity, Uniqueness, Stability, Reconstruction
\end{keywords}
\begin{AMS}
  65J22, 65M32, 35R30
 \end{AMS}

\section{Introduction}
This paper is concerned with the inverse problem of detecting a corrosion contamination between two layers of a non-homogenous electric conductor. 
This problem can be encountered  in several area of engineering such as  diffusion of chemical substances in a given medium, delamination in certain elastic materials \cite{amaya2005mathematical,Hu20021479}.
 
The corrosion may occur in  many different  forms and several models  are considered in the literature \cite{MR3298683,MR3636710,MR3574909,choulli1,choulli2,MR2079585,MR2507508}.  Identifying the Robin parameter from boundary measurements turns out to be 
a way  to locate  the corroded part  in a given medium and possibly evaluate the damage level by electrical impedance tomography process.   

In this work, we consider the mathematical model problem  where the corrosion takes place between two layers of a non-homogenous medium \cite{MR3298683,MR3636710,MR3574909}. The geometry of the corrosion boundary is assumed to be known  in advance, but  the coefficient of corrosion is unknown and is subject 
to be reconstructed from the Neumann-to-Dirichlet  map. This problem shares similarities with the inverse problem with Robin condition on the external boundary \cite{choulli1,choulli2,MR2079585,MR2507508,choulli2018inverse}.

The two major questions for the inverse problem are the uniqueness and stability of a solution. For the classical mathematical model problem,  Chaabane and Jaoua \cite{chaabane1999identification} proved a 
uniqueness result  and    local and monotone Lipschitz stability  estimate from  boundary measurements, provided that the Robin coefficient is a continuous function with some negative lower bound.  The proof rely on the study of the behavior  of the solution of the forward problem with respect to the Robin coefficient.
In \cite{sincich2007lipschitz} Sincich established a Lipschitz stability estimate  from electrostatic  boundary  measurements  under a further prior assumptions  of a piecewise constant Robin coefficient.  The Lipschitz  constant behaves exponentially with respect to the portions considered.

For the model problem considered here, a particular  uniqueness result for  simultaneous reconstruction of the conductivity  and the Robin coefficient  is proven in \cite{MR3298683}. The proof is based on integrals representation of the solution of the forward problem  and decomposition on the basis of spherical harmonics.  

Several numerical methods have been proposed for the inverse Robin problem in the context of  external boundary corrosion detection \cite{MR1463588,MR1675331,MR2087981,MR2079585,MR2929616,MR3311476}. Some methods are based on the variational approach,  such as the Kohn-Vogelius  functional \cite{MR2079585,MR2929616},  the least square functional
\cite{MR3311476}. $L^1$-tracking functional  is considered  in \cite{MR2087981}, where the authors  prove differentiability using complex analysis techniques.  The proof is strongly related to the positivity and monotonicity of  of the derivative of the state.  This kind of functional is robust with respect to data outliers  but it has not been tested numerically.  Some numerical results restricted  to the case of thin plate  are  based on the asymptotic expansion of the solution \cite{MR1463588,MR1675331}.

In this paper we prove a global uniqueness  and Lipschitz stability estimate for the Robin transmission inverse problem, and show how to explicitly calculate the Lipschitz stability constant for a given setting by solving finitely many well-posed PDEs. The proof is based on a monotonicity estimates combined with the method of localized potentials \cite{gebauer2008localized} that we derive from a Runge approximation result. For the numerical solution of the Robin transmission inverse problem, we reformulate the inverse problem into a minimization problem using a least square functional, and use a quasi-Newton method which employs the analytic gradient of the cost function and the approximation of the inverse Hessian is updated by the Broyden, Fletcher, Goldfarb, Shanno (BFGS) scheme \cite{murea2005bfgs}. 

Let us give some more remarks on the relation of this work to previous results. Monotonicity estimates and localized potentials techniques  have been used in different ways for the study of inverse problems \cite{harrach2009uniqueness,harrach2010exact,harrach2012simultaneous,arnold2013unique,harrach2013monotonicity,barth2017detecting,harrach2017local,brander2018monotonicity,griesmaier2018monotonicity,harrach2018helmholtz,harrach2018fractional,harrach2018localizing,seo2018learning} and several recent works build practical reconstruction methods on monotonicity properties \cite{tamburrino2002new,harrach2015combining,harrach2015resolution,harrach2016enhancing,maffucci2016novel,tamburrino2016monotonicity,garde2017comparison,garde2017convergence,garde2017regularized,su2017monotonicity,ventre2017design,harrach2018monotonicity,zhou2018monotonicity}. But, together with \cite{harrach2019uniqueness,seo2018learning}, this is the first work proving a Lipschitz stability result with the relatively simple technique of monotonicity and localized potentials. 

Lipschitz stability for inverse coefficient problems has been studied intensively in the literature, cf.\ \cite{kazemi1993stability,alessandrini1996determining,imanuvilov1998lipschitz,imanuvilov2001global,cheng2003lipschitz,alessandrini2005lipschitz,bacchelli2006lipschitz,bellassoued2006lipschitz,klibanov2006lipschitz,klibanov2006lipschitz_nonstandard,bellassoued2007lipschitz,sincich2007lipschitz,yuan2007lipschitz,yuan2009lipschitz,beretta2011lipschitz,beretta2013lipschitz,melendez2013lipschitz,alessandrini2017lipschitz,beretta2017uniqueness,beilina2017lipschitz,alessandrini2018lipschitz,ruland2018lipschitz}. 
Lipschitz stability results are usually based on technically challenging but constructive approaches involving Carleman estimates or quantitative unique continuation. For some applications these constructive approaches also allowed to quantify the asymptotic behavior of the stability constants (cf., e.g., \cite[Corollary 2.5]{sincich2007lipschitz}). But, to the knowledge of the authors, no previous work has derived a method
to explicitly determine a Lipschitz stability constant for a given setting. 

Our approach on proving Lipschitz stability result differs from these previous works. We first prove an abstract Lipschitz stability result (in Theorem 2.1) by relatively simple, but non-constructive arguments that are based on standard (non-quantitative) unique continuation, a Runge approximation property, the monotonicity result and the method of localized potentials. 
Our new approach can easily be extended to other inverse coefficient problems, and it has 
already been used to prove uniqueness and Lipschitz stability 
in Electrical Impedance Tomography with finitely many electrodes \cite{harrach2019uniqueness}, and to study stability
in machine learning reconstruction algorithms \cite{seo2018learning}.

To quantify the Lipschitz stability constant, we then develop a new method (in Theorem 5.2) 
that allows to explicitly calculate the Lipschitz constant for a given setting by solving a finite number of well-posed PDEs. 
Again, we do not use quantitative unique continuation arguments and do not require analytic estimates of special functions. We still work with special solutions (the localized potentials), but instead of deriving analytic expressions and estimates, we show how to calculate certain localized potentials by solving a finite number of PDEs, and then show how to calculate the Lipschitz constant from these solutions.
This might be considered a new paradigm for deriving Lipschitz stability constants that is less elegant than previous results since we do not obtain any analytic estimates. However, our new approach allows to explicitly calculate Lipschitz stability constants for a given setting which may be important to quantify the achievable resolution and noise robustness in practical applications.
  
The paper is organized as follows. In section 2, we introduce the forward and inverse problem,  the Neumann-to-Dirichlet operator and formulate our main theoretical results: a global   uniqueness result and a Lipschitz stability estimate.  Section 3 and 4 contain the main theoretical tools for this work. In  section 3, we  prove a Runge approximation result and based on this result,  we prove the global uniqueness theorem.  In section 4, we show a monotonicity result between the Robin coefficient and the Neumann-to-Dirichlet operator and deduce the existence of localized potential from the Runge approximation result. Then, we prove the Lipschitz stability estimate. 
Section 5 shows how to calculate the Lipschitz stability constant for a given setting.   
 In section 6, we introduce the minimization problem,  we prove the existence of optimal solution and we compute the first order optimality  condition using the framework of the min-sup differentiability. In the last section, satisfactory numerical  results for two-dimensional  problem are presented to illustrate the efficiency of the method.

%%%%%%%%%%%%%%%%%%%%%%%%%%%%%%%%%%%%%%%
\section{Problem statement}
%%%%%%%%%%%%%%%%%%%%%%%%%%%%%%%%%%%%%%%
Let $\Omega \subset \R^d$ ($d\geq 2$),  be a bounded  domain with  Lipschitz boundary $\partial \Omega$  and let  $\Omega_1$  be an open subset of $\Omega$ such that $\Omega_1\Subset \Omega$, $\Gamma:=\partial\Omega_1$ is Lipschitz, and $\Omega_2:=\Omega\setminus\overline{\Omega_1}$ is connected. Thus, we have $\partial\Omega_2=\Gamma \cup\partial\Omega$; see Figure \ref{geom} for a description of the geometry.
\begin{figure}[ht]
 \centering
 \includegraphics[width=0.4\textwidth]{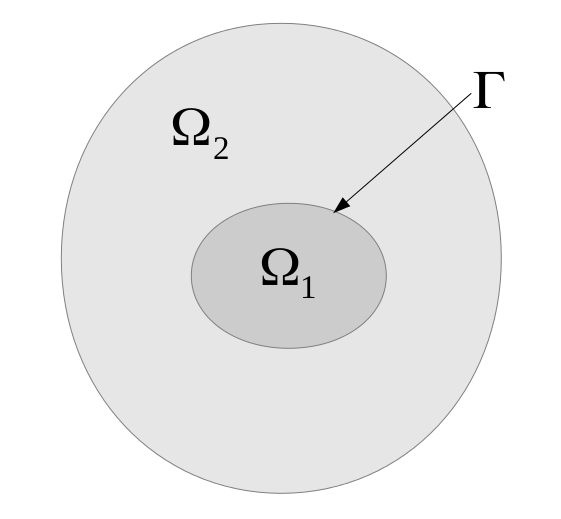}
 \caption{The domain $\Omega=\Omega_{1}\cup\Gamma\cup\Omega_{2}$.}
  \label{geom}
 \end{figure}
For a piecewise-constant conductivity  $\sigma=\sigma_1\chi_{\Omega_1}+\sigma_2\chi_{\Omega_2}$, with given $\sigma_1,\sigma_2>0$, and a Robin parameter 
$\gamma\in L^\infty_+(\Gamma)$, where $L^\infty_+$ denotes the subset of $L^\infty$-functions with positive essential infima, 
%\[
%P_{ad}:=\left\{ \gamma\in  C(\Gamma):  0<c_0\leq\gamma(x)\leq c_1 \text{ for  } x\in \Gamma, \text{ where } c_0, c_1 \text{ are  constants}  %\right\},
%\]
 we consider the following  problem  with Neumann boundary data  $g\in L^{2}(\partial\Omega)$:
\begin{equation}
\label{transm}
\left\{
\begin{aligned}
-\text{div}(\sigma \nabla u) & = 0\quad \text{ in }\Omega_1\cup \Omega_2,
\\
 \sigma_2\partial_{\nu} u&= g\quad \text{ on  }\partial \Omega,\\
 \llbracket u\rrbracket&=  0 \quad\text{ on }  \Gamma,\\
 \llbracket \sigma\partial_{\nu} u\rrbracket &= \gamma u \quad\text{ on }  \Gamma,
\end{aligned}
\right.
\end{equation}
where  $\nu$ is the  unit normal vector to the  interface  $\Gamma$  or $\partial\Omega$ pointing  outward of $\Omega_{1}$ or $\Omega$, respectively,  and
\[
\llbracket u \rrbracket:= u^+-u^-, \text{ with } u^+ = \beta_{0}\big|_{\Omega_2}(u),\quad  u^- = \beta_{0}\big|_{\Omega_1}(u),
 \]
 \[
\llbracket \sigma\partial_{\nu}u \rrbracket:=\sigma_2\partial_{\nu}u^+-\sigma_1\partial_{\nu}u^-, \text{ with } \partial_\nu u^+ = \beta_{1}\big|_{\Omega_2}(u),\quad  \partial_\nu u^- = \beta_{1}\big|_{\Omega_1}(u).
 \]
Here $\beta_{0}\big|_{\Omega_i} : H^1(\Omega_{i})\rightarrow L^{2}(\Gamma)$,   $\beta_{1}\big|_{\Omega_i} : H(\Omega_{i})\rightarrow  L^{2}(\Gamma)$ are the Dirichlet and Neumann trace operators and $H(\Omega_{i})$ is a subspace  of $H^1(\Omega_{i})$ defined by 
 \[
 H(\Omega_{i}):=\left\{ u\in H^1(\Omega_{i}):  \Delta u\in L^2(\Omega_{i}) \right\}.
 \]
 In  \eqref{transm},  $u$ represents the electric potential, $g$ the prescribed current density on the  external boundary $\partial\Omega$,  while $\gamma$ is called the corrosion 
 coefficient  and  describes the presence of corrosion damage on $\Gamma$. 
 
The Neumann problem \eqref{transm} is equivalent to the variational formulation of finding 
$u\in H^1(\Omega)$ sucht that
\begin{equation}
\label{eqv}
\int_\Omega\sigma\nabla u\cdot\nabla w\,dx+\int_\Gamma \gamma uw\,ds=\int_{\partial\Omega}gw\,ds \text{ for all } w\in  H^1(\Omega).
\end{equation}
Using the Riesz representation theorem (or the Lax-Milgram-Theorem), it is easily seen that 
\eqref{eqv} is uniquely solvable and that the solution depends continuously on $g\in L^2(\partial \Omega)$ and $\gamma\in L^\infty_+(\Omega)$.
When dealing with different Robin coefficients or Neumann data, we also denote the solution by $u_\gamma^{(g)}$.

 We  denote by   $\Lambda(\gamma)$  the so-called Neumann-to-Dirichlet  map:
 \[
 \begin{aligned}
 \Lambda(\gamma) : L^{2}(\partial \Omega)& \longrightarrow  L^{2}(\partial \Omega)\\
   g&\longmapsto u_{|\partial \Omega}, 
 \end{aligned}
 \]
  where $u$ is solution to \eqref{transm}. The physical interpretation of the Neumann-to-Dirichlet  (ND) map $\Lambda(\gamma)$ is knowledge of   the resulting voltages distributions  on the boundary   of $\Omega$ corresponding to all possible current distributions on the boundary. Thus the  inverse problem we are concerned with  is the following:
\begin{equation}
\label{invp}
\text{ \it  Find the parameter }  \gamma\quad \text{ \it  from  the knowledge of the  map  } \quad \Lambda(\gamma).  
 \end{equation}
  There are several aspects of this inverse problem which are interesting both for mathematical theory and practical applications.
 \begin{itemize}
 \item {\it Uniqueness.}  If $\Lambda(\gamma_1)=\Lambda(\gamma_2)$ show that $\gamma_1=\gamma_2$.
 \item  {\it Stability.} If  $\Lambda(\gamma_1)$  is close to  $\Lambda(\gamma_2)$,
 show that $\gamma_1$ and $\gamma_2$ are close (in a suitable sense).
 \item  {\it Reconstruction.} Given the boundary measurements $\Lambda_\gamma$, find 
 a procedure to reconstruct the Robin parameter $\gamma$.
 \item {\it  Partial data.} If $S$ is a subset of $\partial\Omega$ and if
  $\Lambda(\gamma_1) g|_S=
 \Lambda(\gamma_2)g|_S $ for all boundary currents $g$, show that $\gamma_1=\gamma_2$.
 \end{itemize}
 
 In this work, we will concentrate on the first three aspects. Note however that our theoretical results on uniqueness and stability also hold with the same proofs when
 currents are applied on an arbitrarily small open subset $S\subseteq \partial \Omega$ (with $\partial \Omega\setminus S$ kept insulated) 
 and voltages are measured on the same subset $S$.

We will show  a monotonicity result: 
\[
\gamma_1\leq \gamma_2 \quad \text{implies }\quad \Lambda(\gamma_1)\geq \Lambda(\gamma_2)\quad \text{in the sense of quadratic forms},
\]
and using Runge approximation and localized potentials, we deduce the following  uniqueness  and stability results  for determining $\gamma$ from $\Lambda(\gamma)$.
\begin{theorem}[Uniqueness]
\label{uniqueness}
For $\gamma_1,\gamma_2\in L^\infty_+(\Gamma)$, 
\[
\Lambda(\gamma_1)=\Lambda(\gamma_2)\quad \text{if and only if}\quad \gamma_1=\gamma_2.
\] 
\end{theorem}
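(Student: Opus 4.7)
My plan is to argue by contradiction, combining two ingredients that the paper sets up in Sections~3 and~4: the pointwise monotonicity estimate for $\Lambda(\gamma_1)-\Lambda(\gamma_2)$ coming from the variational characterization of $u_\gamma^{(g)}$, and the localized potentials obtained from Runge approximation. Only the ``$\Longrightarrow$'' direction needs work, since the reverse implication is immediate from well-posedness of \eqref{eqv}.

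First I would derive the sharp monotonicity inequality. Writing $a_i(v,w)=\int_\Omega \sigma\nabla v\cdot\nabla w\,dx+\int_\Gamma \gamma_i vw\,ds$ and $J_i(v)=\tfrac12 a_i(v,v)-\int_{\partial\Omega}gv\,ds$, the variational formulation \eqref{eqv} identifies $u_{\gamma_i}^{(g)}$ as the unique minimizer of $J_i$, and gives $\langle g,\Lambda(\gamma_i)g\rangle=a_i(u_{\gamma_i}^{(g)},u_{\gamma_i}^{(g)})=-2\min J_i$. Comparing $\min J_1\le J_1(u_{\gamma_2}^{(g)})$ and substituting the identity $\int_{\partial\Omega}gu_{\gamma_2}^{(g)}\,ds=a_2(u_{\gamma_2}^{(g)},u_{\gamma_2}^{(g)})$ yields
\[
\langle g,(\Lambda(\gamma_1)-\Lambda(\gamma_2))g\rangle \;\geq\; \int_\Gamma (\gamma_2-\gamma_1)\,|u_{\gamma_2}^{(g)}|^2\,ds,
\]
and the analogous estimate holds after swapping indices. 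This is precisely the monotonicity announced in the excerpt.

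Now suppose, for contradiction, that $\Lambda(\gamma_1)=\Lambda(\gamma_2)$ but $\gamma_1\neq\gamma_2$ in $L^\infty(\Gamma)$, so that the left-hand side above is identically zero in $g$. After possibly swapping indices there exist $\epsilon>0$ and a set $E\subset\Gamma$ of positive surface measure with $\gamma_2-\gamma_1\geq\epsilon$ on $E$. By the Lebesgue density theorem I fix a density point $x_0\in E$ and take a relatively open neighbourhood $V\subset\Gamma$ of $x_0$ with $|V\setminus E|/|V|$ as small as desired. I then invoke the localized potentials built in Section~4 from the Runge approximation of Section~3 to produce a sequence $(g_n)\subset L^2(\partial\Omega)$ whose solutions satisfy $\int_V |u_{\gamma_2}^{(g_n)}|^2\,ds\to\infty$ and $\int_{\Gamma\setminus V}|u_{\gamma_2}^{(g_n)}|^2\,ds\to 0$. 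The $(\Gamma\setminus V)$-piece of the right-hand side of the monotonicity estimate is then controlled in absolute value by $\|\gamma_2-\gamma_1\|_\infty\int_{\Gamma\setminus V}|u_{\gamma_2}^{(g_n)}|^2\,ds=o(1)$, whereas the $V$-piece can be arranged to be bounded below by $\tfrac{\epsilon}{2}\int_V|u_{\gamma_2}^{(g_n)}|^2\,ds\to+\infty$ once $V$ has been chosen concentrated enough in $E$ to absorb the bad contribution on $V\setminus E$. This contradicts $0\geq \int_\Gamma(\gamma_2-\gamma_1)|u_{\gamma_2}^{(g_n)}|^2\,ds$ and forces $\gamma_1=\gamma_2$.

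I expect the main obstacle to lie in the construction of the localized potentials: producing Neumann data $g_n$ that blow up the $L^2$-trace of $u_{\gamma_2}^{(g_n)}$ on a prescribed open piece of the interface $\Gamma$ while simultaneously driving it to zero on the complementary (closed) piece is where the Robin transmission structure, the Runge approximation property, and standard (non-quantitative) unique continuation truly enter. By comparison, the variational derivation of the monotonicity inequality and the Lebesgue density reduction are routine.
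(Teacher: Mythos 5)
Your route is genuinely different from the paper's own proof of Theorem~\ref{uniqueness}. The paper polarizes the variational identity into an Alessandrini-type formula
$\int_{\partial\Omega} h(\Lambda(\gamma_2)-\Lambda(\gamma_1))g\,ds=\int_\Gamma(\gamma_1-\gamma_2)u_1^h u_2^g\,ds$
and then applies the Runge approximation (Theorem~\ref{thm:runge}) twice to the two factors $u_2^g|_\Gamma$ and $u_1^h|_\Gamma$, which are dense in $L^2(\Gamma)$; this kills $\gamma_1-\gamma_2$ directly, with no need for monotonicity, localized potentials, or a sign argument. What you propose --- one-sided monotonicity plus localized potentials concentrated where $\gamma_2-\gamma_1\geq\epsilon$ --- is essentially the paper's proof of the Lipschitz stability Theorem~\ref{stability} specialized to two coefficients (the paper itself remarks that Theorem~\ref{stability} implies Theorem~\ref{uniqueness} via $\mathcal A=\mathrm{span}\{\gamma_1,\gamma_2\}$). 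Your derivation of the monotonicity inequality via the Dirichlet energy minimization is correct and equivalent to Lemma~\ref{mono}. The trade-off: the paper's direct proof is shorter and needs only Runge density; yours is heavier but is the one that generalizes to a quantitative statement.

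There is, however, one step that does not follow as written. After fixing a density point of $E=\{\gamma_2-\gamma_1\geq\epsilon\}$ and a small relatively open $V$, you apply Lemma~\ref{lemma:locpot} with $M=V$ and claim the $V$-contribution is bounded below by $\tfrac{\epsilon}{2}\int_V|u_{\gamma_2}^{(g_n)}|^2\,ds$ ``once $V$ is concentrated enough in $E$.'' The statement of the localized potentials lemma controls only $\int_V|u_n|^2\to\infty$ and $\int_{\Gamma\setminus V}|u_n|^2\to 0$; it says nothing about how the mass of $|u_n|^2$ splits between $V\cap E$ and $V\setminus E$, so smallness of $|V\setminus E|$ relative to $|V|$ does not let you absorb the negative term $-\|\gamma_2-\gamma_1\|_\infty\int_{V\setminus E}|u_n|^2\,ds$. (One could rescue this by inspecting the proof of Lemma~\ref{lemma:locpot}, where $u_n|_\Gamma$ is built to approximate $\chi_V/|V|$ before normalization, but that is not what you invoke.) The clean fix makes the whole density-point detour unnecessary: Lemma~\ref{lemma:locpot} only requires $M\subseteq\Gamma$ to be measurable with positive boundary measure, so apply it with $M=E$ itself. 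Then
$0\geq\int_\Gamma(\gamma_2-\gamma_1)|u_{\gamma_2}^{(g_n)}|^2\,ds\geq \epsilon\int_E|u_{\gamma_2}^{(g_n)}|^2\,ds-\|\gamma_2-\gamma_1\|_{L^\infty(\Gamma)}\int_{\Gamma\setminus E}|u_{\gamma_2}^{(g_n)}|^2\,ds\to+\infty$,
which is the desired contradiction. With that repair your argument is correct.
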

 \begin{theorem}[Lipschitz stability]
\label{stability}
Let $\mathcal{A}$ be a finite dimensional subspace of $L^\infty(\Gamma)$. Given constants $b>a>0$, we define
\[
\mathcal{A}_{[a,b]}:=\left\{\gamma\in \mathcal{A}:\quad   a\leq\gamma(x)\leq b \text{ for } x\in \Gamma\ \text{(a.e.)}\right\}.   
\]
Then there exists a constant $C>0$ such that   for all $\gamma_1,\gamma_2\in \mathcal{A}_{[a,b]}$, we have 
\[
\| \gamma_1-\gamma_2 \|_{L^{\infty}(\Gamma)}\leq   C\| \Lambda(\gamma_1)-\Lambda(\gamma_2) \|_{*},
\]
where $\| . \|_{*}$ is the natural operator norm in $\mathcal L(L^2(\partial \Omega))$.
%for the Neumann-to-Dirichlet maps.,
\end{theorem}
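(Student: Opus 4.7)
The plan is to argue by contradiction and to exploit the compactness of $\mathcal{A}_{[a,b]}$ in the finite-dimensional subspace $\mathcal{A}$, combining Theorem~\ref{uniqueness}, the monotonicity estimate, and the localized potentials obtained from the Runge approximation of Section~4.

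Assume no such constant $C$ exists, so that there are sequences $\gamma_1^{(n)},\gamma_2^{(n)}\in \mathcal{A}_{[a,b]}$ with $\gamma_1^{(n)}\neq \gamma_2^{(n)}$ and
\[
\|\Lambda(\gamma_1^{(n)})-\Lambda(\gamma_2^{(n)})\|_{*} \leq \tfrac{1}{n}\|\gamma_1^{(n)}-\gamma_2^{(n)}\|_{L^\infty(\Gamma)}.
\]
Since $\mathcal{A}_{[a,b]}$ is closed and bounded in the finite-dimensional space $\mathcal{A}$, I first pass to subsequences so that $\gamma_i^{(n)}\to \gamma_i^*\in \mathcal{A}_{[a,b]}$ in $L^\infty(\Gamma)$ for $i=1,2$. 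Setting
\[
h_n := \frac{\gamma_1^{(n)}-\gamma_2^{(n)}}{\|\gamma_1^{(n)}-\gamma_2^{(n)}\|_{L^\infty(\Gamma)}} \in \mathcal{A},
\]
the sequence $(h_n)$ lies on the unit sphere of the finite-dimensional subspace $\mathcal{A}$, so after a further extraction $h_n\to h^*$ in $L^\infty(\Gamma)$ with $\|h^*\|_{L^\infty(\Gamma)}=1$.

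Two cases must be ruled out. If $\gamma_1^*\neq \gamma_2^*$, then continuity of $\gamma\mapsto \Lambda(\gamma)$ in $L^\infty$ (which follows from the standard energy estimate for \eqref{eqv} using the uniform coercivity constant $a>0$) forces $\Lambda(\gamma_1^*)=\Lambda(\gamma_2^*)$, and Theorem~\ref{uniqueness} yields the contradiction $\gamma_1^*=\gamma_2^*$. In the degenerate case $\gamma_1^*=\gamma_2^*=:\gamma^*$, I sandwich the rescaled middle quantity using the monotonicity estimate of Section~4:
\[
\int_\Gamma h_n\,|u_{\gamma_1^{(n)}}^{(g)}|^2\,ds \;\leq\; \frac{\langle(\Lambda(\gamma_2^{(n)})-\Lambda(\gamma_1^{(n)}))g,g\rangle}{\|\gamma_1^{(n)}-\gamma_2^{(n)}\|_{L^\infty(\Gamma)}} \;\leq\; \int_\Gamma h_n\,|u_{\gamma_2^{(n)}}^{(g)}|^2\,ds.
\]
The middle term is bounded in modulus by $\|g\|_{L^2(\partial\Omega)}^2/n\to 0$, while $L^\infty$-continuity of $\gamma\mapsto u_\gamma^{(g)}$ in $H^1(\Omega)$ together with continuity of the Dirichlet trace forces both outer bounds to converge to $\int_\Gamma h^*|u_{\gamma^*}^{(g)}|^2\,ds$. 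Hence
\[
\int_\Gamma h^*\,|u_{\gamma^*}^{(g)}|^2\,ds = 0 \qquad \text{for every } g\in L^2(\partial\Omega).
\]

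Finally, I use the localized potentials of Section~4 at the fixed background coefficient $\gamma^*$ to rule out this identity. Since $\|h^*\|_{L^\infty(\Gamma)}=1$, there exist $c>0$ and a measurable set $M\subseteq \Gamma$ of positive surface measure on which $h^*\geq c$, possibly after replacing $h^*$ by $-h^*$ and invoking the opposite sandwich inequality. I then choose Neumann data $g_k$ with $\|u_{\gamma^*}^{(g_k)}\|_{L^2(M)}\to\infty$ while $\|u_{\gamma^*}^{(g_k)}\|_{L^2(\Gamma\setminus M)}$ remains bounded (or tends to zero), giving
\[
\int_\Gamma h^*\,|u_{\gamma^*}^{(g_k)}|^2\,ds \;\geq\; c\,\|u_{\gamma^*}^{(g_k)}\|_{L^2(M)}^2 - \|u_{\gamma^*}^{(g_k)}\|_{L^2(\Gamma\setminus M)}^2 \;\longrightarrow\; +\infty,
\]
the desired contradiction. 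The step I expect to require most care is this last one: I have to ensure that the localized potentials established in Section~4 can indeed be tuned on the (possibly irregular) sign set $M$ of $h^*$. If the available version of the localization result is phrased only for open subsets of $\Gamma$, I would first replace $M$ by an open subset on which $h^*\geq c/2$ using inner regularity of the surface measure and Lebesgue density, and only then invoke the localized potentials.
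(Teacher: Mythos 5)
Your proposal is correct and follows essentially the same route as the paper's proof: both rest on the two one-sided monotonicity inequalities from Lemma~\ref{mono} (in original and interchanged form), compactness of $\mathcal{A}_{[a,b]}$ and of the unit sphere of the finite-dimensional space $\mathcal{A}$, and the localized potentials of Lemma~\ref{lemma:locpot} applied to a positive-measure sign set of the normalized difference. The differences are only presentational: you argue by contradiction with subsequence extraction (plus a dispensable case distinction invoking Theorem~\ref{uniqueness} when $\gamma_1^*\neq\gamma_2^*$, which could be absorbed into the localized-potentials step by using the two one-sided bounds separately, as the paper's $\max$ in $\Psi$ does), whereas the paper directly shows that the lower-semicontinuous inf-sup functional attains a positive minimum on the compact constraint set; also, your concern about the sign set $M$ needing to be open is unfounded, since Lemma~\ref{lemma:locpot} is proved via Runge approximation of $\chi_M/\int_M ds$ and holds for any measurable $M\subseteq\Gamma$ of positive boundary measure.
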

Theorem \ref{uniqueness} and \ref{stability} will be proven in the following two sections. Note that Theorem \ref{stability}
obviously implies Theorem \ref{uniqueness} by setting $\mathcal 
A:=\mathrm{span}\{\gamma_1,\gamma_2\}$ and choosing $a,b>0$ so that 
$\gamma_1,\gamma_2\in \mathcal A_{[a,b]}$. Nevertheless we give an 
independent proof of Theorem \ref{uniqueness} since it is short and simple.

\section{Runge approximation and uniqueness}

We will deduce the uniqueness theorem \ref{uniqueness} from the following Runge approximation result:

\begin{theorem}[Runge approximation]\label{thm:runge}
Let $\gamma\in L^\infty_+(\Gamma)$. For all $f\in L^2(\Gamma)$ there exists a sequence $(g_n)_{n\in\N}\subset L^2(\partial\Omega)$ such that the corresponding solutions $u^{(g_n)}$ of \eqref{transm} with boundary data $g_n$, $n\in \N$, fulfill
\[
u^{(g_n)}|_\Gamma\to f \quad \text{ in } L^2(\Gamma).
\]
\end{theorem}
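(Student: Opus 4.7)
The plan is a standard duality/Hahn--Banach argument tailored to the transmission problem. Let $V \subseteq L^2(\Gamma)$ denote the closure of the set of all traces $\{u^{(g)}|_\Gamma : g\in L^2(\partial\Omega)\}$. By a Hahn--Banach/orthogonal-complement argument it suffices to show that any $h\in L^2(\Gamma)$ with $\int_\Gamma h\, u^{(g)}|_\Gamma\, ds = 0$ for every $g\in L^2(\partial\Omega)$ must vanish. To exploit such an $h$, I would introduce a dual state $v\in H^1(\Omega)$ as the unique solution of
\[
\int_\Omega \sigma \nabla v\cdot\nabla w\,dx+\int_\Gamma \gamma v w\,ds=\int_\Gamma h w\,ds\quad\text{for all }w\in H^1(\Omega),
\]
which is well-posed by Lax--Milgram just as for \eqref{eqv}. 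Interpreted in strong form, $v$ solves the same transmission equations $-\operatorname{div}(\sigma\nabla v)=0$ in $\Omega_1\cup\Omega_2$, $\llbracket v\rrbracket=0$ on $\Gamma$, but with homogeneous Neumann data $\sigma_2\partial_\nu v=0$ on $\partial\Omega$ and modified jump $\llbracket\sigma\partial_\nu v\rrbracket=\gamma v-h$ on $\Gamma$.

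Next, testing the weak formulation \eqref{eqv} for $u^{(g)}$ with $v$, and the dual equation with $u^{(g)}$, and subtracting the two resulting identities gives
\[
\int_{\partial\Omega} g\, v|_{\partial\Omega}\,ds = \int_\Gamma h\, u^{(g)}|_\Gamma\,ds = 0\quad\text{for every }g\in L^2(\partial\Omega).
\]
Hence $v|_{\partial\Omega}=0$, while simultaneously $\sigma_2\partial_\nu v|_{\partial\Omega}=0$ from the boundary condition of the dual problem. Since $\sigma$ is constant (equal to $\sigma_2$) on the connected open set $\Omega_2$, the function $v$ is harmonic in $\Omega_2$ with vanishing Cauchy data on the non-empty open piece $\partial\Omega\subset\partial\Omega_2$. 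Classical (qualitative) unique continuation then forces $v\equiv 0$ in $\Omega_2$.

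From $v=0$ in $\Omega_2$ the transmission condition $\llbracket v\rrbracket=0$ yields $v^-=0$ on $\Gamma=\partial\Omega_1$. The restriction $v|_{\Omega_1}$ is harmonic (again $\sigma=\sigma_1$ is constant) and has vanishing Dirichlet trace, so uniqueness of the Dirichlet problem gives $v\equiv 0$ in $\Omega_1$ as well, hence $\partial_\nu v^-=0$. Plugging these into the jump condition $\llbracket\sigma\partial_\nu v\rrbracket=\gamma v-h$ on $\Gamma$ produces $h=0$, which is the desired contradiction and completes the proof.

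The main obstacle here is essentially conceptual rather than technical: setting up the dual problem with the correct source term so that duality converts the orthogonality condition on $\Gamma$ into vanishing Dirichlet data on $\partial\Omega$, thereby putting us in a position to apply unique continuation. The connectedness of $\Omega_2$ assumed in the geometry is exactly what makes the unique continuation step applicable; the piecewise-constant nature of $\sigma$ is convenient because $v$ is then genuinely harmonic on each component, so no quantitative or special-function estimates are required.
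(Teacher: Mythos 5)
Your proposal is correct and follows essentially the same route as the paper: the paper phrases the duality step as ``the operator $A f := v|_{\partial\Omega}$ has adjoint $A^*g = u^{(g)}|_\Gamma$, and dense range of $A^*$ is equivalent to injectivity of $A$,'' which is exactly your Hahn--Banach/orthogonality argument with the same dual state $v$, the same unique continuation in the connected set $\Omega_2$, and the same Dirichlet-problem step in $\Omega_1$. The only cosmetic difference is at the very end, where the paper deduces $f=0$ directly from the variational identity $\int_\Gamma f w\,ds=0$ for all $w\in H^1(\Omega)$ rather than from the strong-form jump condition, which avoids any discussion of Neumann traces.
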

\begin{proof}
We introduce the operator
\[
A:  L^2(\Gamma)\to  L^2(\partial\Omega), \quad f\mapsto A f:= v_{|\partial\Omega}, 
\]
where $v\in H^1(\Omega)$  solves 
\begin{equation}\label{eq:runge_v}
\int_\Omega\sigma\nabla v\cdot\nabla w\,dx+\int_\Gamma\gamma v w\,ds=\int_{\Gamma}f w\,ds\quad \text{ for all } w\in H^1(\Omega).
\end{equation}

Let  $g\in L^2(\partial \Omega)$ and $u\in H^1(\Omega)$ be the corresponding solution  of problem \eqref{transm}.  Then the adjoint operator of  $A$ is characterized by 
\begin{align*}
\int_{\Gamma} \left( A^* g \right) f \,ds 
&= \int_{\partial \Omega} \left( A f \right) g \,ds 
= \int_{\partial \Omega} v g \,ds 
=\int_\Omega \nabla u\cdot\nabla v\,dx+\int_\Gamma \gamma uv\,ds\\
&=\int_{\Gamma}f u\,ds, \quad \text{ for all } f\in L^2(\Gamma),
\end{align*}
which shows that $A^*:\ L^2(\partial \Omega)\to L^2(\Gamma)$ fulfills 
$A^* g=u|_{\Gamma}$. The assertion follows if we can show that $A^*$ has dense range, which is equivalent
to $A$ being injective.

To prove this, let $v|_{\partial \Omega}=Af=0$ with $v\in H^1(\Omega)$ solving \eqref{eq:runge_v}.
Since \eqref{eq:runge_v} also implies that $\sigma_2\partial_\nu v|_{\partial \Omega}=0$, and $\Omega_2$ is connected, it follows by unique continuation that $v|_{\Omega_2}=0$ and thus $v^+|_{\Gamma}=0$. Since $v\in H^1(\Omega)$ this also implies that
$v^-|_{\Gamma}=0$, and together with \eqref{eq:runge_v} we obtain that $v|_{\Omega_1}\in H^1(\Omega_1) $ solves
\[
\nabla\cdot (\sigma_1 \nabla v)=0 \quad \text{ in } \Omega_1
\]
with homogeneous Dirichlet boundary data $v|_{\partial \Omega_1}=0$. Hence, $v|_{\Omega_1}=0$, so that $v=0$ almost everywhere in $\Omega$.
From \eqref{eq:runge_v} it then follows that $\int_{\Gamma}f w\,ds=0$ for all $w\in H^1(\Omega)$ and thus $f=0$.
\end{proof}

\begin{proof}[Proof of Theorem \ref{uniqueness}]
For Robin parameters $\gamma_1,\gamma_2\in L^\infty_+(\Gamma)$ and Neumann data $g,h\in L^2(\partial \Omega)$ we denote the
corresponding solutions of \eqref{transm} by $u_{1}^g$, $u_1^h$, $u_2^g$, and $u_2^h$ respectively. 

The variational formulation \eqref{eqv} yields the Alessandrini-type equality
\begin{align*}
\lefteqn{\int_{\partial \Omega} h \left( \Lambda(\gamma_2)-\Lambda(\gamma_1)\right) g \, ds}\\
&= \int_{\partial \Omega} h \Lambda(\gamma_2) g \, ds  - \int_{\partial \Omega} g \Lambda(\gamma_1) h \, ds
= \int_{\partial \Omega} h u_2^g \, ds  - \int_{\partial \Omega} g u_1^h \, ds\\
&= \int_\Omega\sigma\nabla u_1^h \cdot\nabla u_2^g \,dx+\int_\Gamma \gamma_1 u_1^h u_2^g\, ds
- \left( \int_\Omega\sigma\nabla u_2^g \cdot\nabla u_1^h \,dx+\int_\Gamma \gamma_2 u_2^g u_1^h\, ds\right)\\
&= \int_\Gamma (\gamma_1-\gamma_2) u_1^h u_2^g\, ds. 
\end{align*}
This shows that $\Lambda(\gamma_1)=\Lambda(\gamma_2)$ implies that
\[
\int_\Gamma (\gamma_1-\gamma_2) u_1^h u_2^g\, ds=0,  \quad \text{ for all } g,h\in L^2(\partial \Omega).
\]
Using the Runge approximation result in theorem~\ref{thm:runge}, this yields that 
$(\gamma_1-\gamma_2) u_1^h=0$ (a.e.) in $\Gamma$ for all $h\in L^2(\partial \Omega)$, and using theorem~\ref{thm:runge} again, this implies
$\gamma_1=\gamma_2$. 
\end{proof}

%%%%%%%%%%%%%%%%%%%%%%%%%%%%%%%%%

\section{Monotonicity, localized potentials and Lipschitz stability}

To prove the Lipschitz stability result in Theorem \ref{stability}, we first show a monotonicity estimate between the Robin coefficient and the Neumann-to-Dirichlet operator, and deduce the existence of localized potentials from the Runge approximation result in the last section.
The idea of the following monotonicity estimate stems from the works of Ikehata, Kang, Seo, and Sheen \cite{ikehata1998size,kang1997inverse},
and the proof is analogue to \cite[Lemma 2.1]{harrach2010exact}, cf.\ also the other references to monotonicity-based methods in the introduction.

\begin{lemma}[Monotonicity estimate]
\label{mono}
Let $\gamma_1, \gamma_2\in L^\infty_+(\Gamma)$ be two Robin parameters, let $g\in L^2(\partial\Omega)$ be an applied boundary current, and let $u_2:=u^{g}_{\gamma_2}\in H^1(\Omega)$ solve \eqref{transm} for the boundary current $g$ and the Robin parameter $\gamma_2$. Then
\begin{equation}
\label{eqmono}
\int_\Gamma(\gamma_1-\gamma_2)u_2^2\,ds\geq \int_{\partial \Omega} g \left(\Lambda(\gamma_2)-\Lambda(\gamma_1)\right) g\, ds\geq 
\int_\Gamma\left(\gamma_2-\frac{\gamma^2_2}{\gamma_1}  \right)u^2_2\,ds.
\end{equation}
\end{lemma}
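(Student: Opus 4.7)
The plan is to combine the Alessandrini-type polarization identity that appeared in the proof of Theorem~\ref{uniqueness} with an energy estimate for the difference $w := u_1 - u_2$, where $u_1 := u^{g}_{\gamma_1}$ and $u_2 := u^{g}_{\gamma_2}$. Specializing that identity to $h = g$ and writing $u_1 u_2 = u_2^2 + u_2 w$ gives
\begin{equation*}
\int_{\partial\Omega} g (\Lambda(\gamma_2) - \Lambda(\gamma_1)) g \, ds
 = \int_\Gamma (\gamma_1 - \gamma_2) u_2^2 \, ds + \int_\Gamma (\gamma_1 - \gamma_2) u_2 w \, ds.
\end{equation*}
Subtracting the weak formulation \eqref{eqv} for $u_1$ from that for $u_2$ and testing the resulting identity against $w$ itself yields
\begin{equation*}
\int_\Omega \sigma |\nabla w|^2 \, dx + \int_\Gamma \gamma_1 w^2 \, ds
= - \int_\Gamma (\gamma_1 - \gamma_2) u_2 w \, ds =: R \geq 0.
\end{equation*}
Substituting this back gives the clean identity
\begin{equation*}
\int_{\partial\Omega} g (\Lambda(\gamma_2) - \Lambda(\gamma_1)) g \, ds
 = \int_\Gamma (\gamma_1 - \gamma_2) u_2^2 \, ds - R,
\end{equation*}
from which the first inequality in \eqref{eqmono} follows immediately from $R \geq 0$.

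For the second inequality I would bound $R$ from above by $\int_\Gamma (\gamma_1 - \gamma_2)^2 \gamma_1^{-1} u_2^2 \, ds$. Applying Cauchy--Schwarz with weight $\sqrt{\gamma_1}$ to the defining expression of $R$ yields
\begin{equation*}
R \leq \left(\int_\Gamma \frac{(\gamma_1-\gamma_2)^2}{\gamma_1} u_2^2\, ds\right)^{1/2}\left(\int_\Gamma \gamma_1 w^2\, ds\right)^{1/2},
\end{equation*}
and then Young's inequality $\sqrt{ab} \leq \tfrac{1}{2}(a+b)$ absorbs the $\int_\Gamma \gamma_1 w^2$ factor back into $R$, giving the desired bound. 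Inserting it into the identity above and using
\begin{equation*}
(\gamma_1-\gamma_2) - \frac{(\gamma_1-\gamma_2)^2}{\gamma_1} = \frac{\gamma_2(\gamma_1 - \gamma_2)}{\gamma_1} = \gamma_2 - \frac{\gamma_2^2}{\gamma_1}
\end{equation*}
delivers exactly the second inequality in \eqref{eqmono}.

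The main subtlety is that the Cauchy--Schwarz/Young combination initially seems to lose a factor of $\tfrac{1}{2}$: setting $P := \int_\Omega \sigma |\nabla w|^2\, dx$, $Q := \int_\Gamma \gamma_1 w^2\, ds$ and $S := \int_\Gamma (\gamma_1-\gamma_2)^2 \gamma_1^{-1} u_2^2 \, ds$, the argument above gives $P+Q = R \leq \sqrt{SQ} \leq \tfrac{1}{2}(S+Q)$, i.e.\ $2P + Q \leq S$. This is in fact strictly stronger than the bound $R = P + Q \leq S$ one actually needs, so the apparent loss of a factor is harmless; everything else is routine manipulation of the weak formulation.
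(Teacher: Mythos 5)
Your proof is correct. For the first inequality you arrive at exactly the same identity as the paper, namely
$\int_{\partial\Omega} g(\Lambda(\gamma_2)-\Lambda(\gamma_1))g\,ds = \int_\Gamma(\gamma_1-\gamma_2)u_2^2\,ds - \bigl(\int_\Omega\sigma|\nabla w|^2\,dx + \int_\Gamma\gamma_1 w^2\,ds\bigr)$ with $w=u_1-u_2$; you reach it by splitting $u_1u_2=u_2^2+u_2w$ inside the Alessandrini identity and testing the subtracted weak forms with $w$, whereas the paper expands the energy of $w$ directly. That difference is cosmetic. The second inequality is where you genuinely diverge. The paper writes the difference of quadratic forms with the roles of $\gamma_1$ and $\gamma_2$ interchanged and then completes the square,
$\gamma_1 u_1^2 - 2\gamma_2 u_1 u_2 + \gamma_2 u_2^2 = \gamma_1\bigl(u_1-\tfrac{\gamma_2}{\gamma_1}u_2\bigr)^2 + \bigl(\gamma_2-\tfrac{\gamma_2^2}{\gamma_1}\bigr)u_2^2$,
so the lower bound drops out as an exact identity plus two manifestly nonnegative terms. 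You instead stay with the first identity and bound the cross term $R$ by weighted Cauchy--Schwarz and Young, correctly observing that the apparent factor-of-two loss is absorbed because $R=P+Q\le 2P+Q\le S$ with $P\ge 0$; the algebraic identity $(\gamma_1-\gamma_2)-\tfrac{(\gamma_1-\gamma_2)^2}{\gamma_1}=\gamma_2-\tfrac{\gamma_2^2}{\gamma_1}$ then closes the argument. The two mechanisms are cousins (completing the square is the equality case of your Cauchy--Schwarz/Young step), but your route avoids the role swap and makes explicit where the degradation from $\gamma_1-\gamma_2$ to $\gamma_2-\gamma_2^2/\gamma_1$ comes from, at the cost of the extra bookkeeping with $P$, $Q$, $S$; the paper's version is shorter and yields the bound as an identity rather than an estimate. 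Both are equally elementary and equally valid.
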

\begin{proof}
Let $u_1:=u^g_{\gamma_1}\in H^1(\Omega)$. From the variational equation,  we deduce 
\[
\int_\Omega\sigma \nabla u_1\cdot\nabla u_2\,dx+\int_\Gamma \gamma_1 u_1u_2\,ds=
\int_{\partial \Omega} g \Lambda(\gamma_2)g\, ds=\int_\Omega\sigma |\nabla u_2|^2\,dx+\int_\Gamma \gamma_2 u_2^2\,ds.
\]
Thus 
\[
\begin{aligned}
\int_\Omega\sigma |\nabla(u_1-u_2)|^2\,dx&+\int_\Gamma\gamma_1(u_1-u_2)^2\,ds \\
&=\int_\Omega\sigma |\nabla u_1|^2\,dx+ \int_\Gamma\gamma_1u_1^2\,ds
+\int_\Omega\sigma |\nabla u_2|^2\,dx+ \int_\Gamma\gamma_1u_2^2\,ds\\
&-2\int_\Omega\sigma |\nabla u_2|^2\,dx-2\int_\Gamma \gamma_2u^2_2\,ds\\
&=\int_{\partial \Omega} g \Lambda(\gamma_1)g\, ds
-\int_{\partial \Omega} g \Lambda(\gamma_2)g\, ds
+\int_\Gamma (\gamma_1-\gamma_2)u^2_2\,ds.
\end{aligned}
\]
Since the left-hand side is nonnegative, the first asserted inequality follows. 

Interchanging $\gamma_1$ and $\gamma_2$, we  get 
\[
\begin{aligned}
&\int_{\partial \Omega} g \Lambda(\gamma_2)g\, ds-\int_{\partial \Omega} g \Lambda(\gamma_1)g\, ds\\
&=\int_\Omega\sigma |\nabla(u_2-u_1)|^2\,dx+\int_\Gamma\gamma_2(u_2-u_1)^2\,ds
-\int_\Gamma (\gamma_2-\gamma_1)u^2_1\,ds\\
&=\int_\Omega\sigma |\nabla(u_2-u_1)|^2\,dx+\int_\Gamma\left(\gamma_2u^2_2-2\gamma_2u_1u_2+\gamma_1u^2_1\right)\,ds\\
&=\int_\Omega\sigma |\nabla(u_2-u_1)|^2\,dx+\int_\Gamma \gamma_1\left(u_1-\frac{\gamma_2}{\gamma_1}u_2 \right)^2\,ds+\int_\Gamma\left(\gamma_2-\frac{\gamma^2_2}{\gamma_1} \right)u^2_2\,ds.
\end{aligned}
\]
Since the first two integrals on the right-hand side are non negative, the second asserted inequality follows. 
\end{proof}

Note that we call Lemma \ref{mono} a monotonicity estimate  because of the following  corollary:
\begin{corollary}[Monotonicity]
For two Robin parameters $\gamma_1, \gamma_2 \in L^\infty_+(\Gamma)$
\begin{equation}
\gamma_1\leq \gamma_2 \quad \text{implies }\quad \Lambda(\gamma_1)\geq \Lambda(\gamma_2)\quad \text{in the sense of quadratic forms}.
\end{equation}
\end{corollary}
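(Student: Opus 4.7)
The plan is to read off the corollary directly from the first inequality in the monotonicity estimate of Lemma \ref{mono}. That lemma asserts, for any applied boundary current $g\in L^2(\partial\Omega)$ and the solution $u_2=u_{\gamma_2}^{(g)}$,
\[
\int_\Gamma(\gamma_1-\gamma_2)u_2^2\,ds \;\geq\; \int_{\partial\Omega} g\bigl(\Lambda(\gamma_2)-\Lambda(\gamma_1)\bigr)g\,ds.
\]
This is exactly the bridge between a pointwise comparison of the Robin parameters and a quadratic-form comparison of the Neumann-to-Dirichlet operators, so no further machinery is needed.

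Given the hypothesis $\gamma_1\leq\gamma_2$ a.e.\ on $\Gamma$, the integrand $(\gamma_1-\gamma_2)u_2^2$ is non-positive pointwise (since $u_2^2\geq 0$), hence the left-hand side above is $\leq 0$. Consequently
\[
\int_{\partial\Omega} g\bigl(\Lambda(\gamma_2)-\Lambda(\gamma_1)\bigr)g\,ds \;\leq\; 0 \qquad\text{for every }g\in L^2(\partial\Omega),
\]
which by definition means $\Lambda(\gamma_2)\leq \Lambda(\gamma_1)$ in the sense of quadratic forms, i.e.\ $\Lambda(\gamma_1)\geq\Lambda(\gamma_2)$.

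There is no real obstacle here: the corollary is a one-line consequence of the left-hand inequality in \eqref{eqmono}. The only thing to be careful about is sign bookkeeping (the lemma is stated with $\Lambda(\gamma_2)-\Lambda(\gamma_1)$ on the right, so that $\gamma_1\leq\gamma_2$ yields $\Lambda(\gamma_1)\geq\Lambda(\gamma_2)$, not the other way around) and noting that the inequality holds for \emph{every} admissible current $g$, which is what is needed to conclude the quadratic-form inequality on all of $L^2(\partial\Omega)$.
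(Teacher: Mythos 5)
Your argument is correct and is exactly the derivation the paper intends: the corollary is stated as an immediate consequence of the first inequality in \eqref{eqmono}, and your sign bookkeeping (non-positivity of $\int_\Gamma(\gamma_1-\gamma_2)u_2^2\,ds$ forcing the quadratic form of $\Lambda(\gamma_2)-\Lambda(\gamma_1)$ to be non-positive for every $g$) is precisely the one-line proof the authors leave implicit.
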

Let us stress, however, that Lemma \ref{mono} holds for any $\gamma_1,\gamma_2\in L^\infty_+(\Omega)$ and does not require $\gamma_1\leq \gamma_2$ or $\gamma_1\geq \gamma_2$. 

The existence of localized potentials \cite{gebauer2008localized} follows from the Runge approximation property as in \cite[Corollary~3.5]{harrach2018fractional}:
\begin{lemma}[Localized potentials]\label{lemma:locpot}
Let $\gamma\in L^\infty_+(\Gamma)$, and let  $M\subseteq \Gamma$ be a subset with positive boundary measure. 
Then there exists a sequence $(g_n)_{n\in\N}\subset L^2(\partial\Omega)$ such that the corresponding solutions $u^{(g_n)}$ of \eqref{transm} fulfill
\begin{equation*}
\lim_{n\to \infty}\int_{M} |u^{(g_n)}|^2\,ds=\infty \quad \text{ and }\quad
\lim_{n\to \infty}\int_{\Gamma\setminus M} |u^{(g_n)}|^2\,ds=0.
\end{equation*}
\end{lemma}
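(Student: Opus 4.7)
The plan is to deduce Lemma~\ref{lemma:locpot} from the Runge approximation result (Theorem~\ref{thm:runge}) via the standard functional-analytic argument introduced by Gebauer~\cite{gebauer2008localized}. First I would reformulate the Runge theorem in operator-theoretic form: define the bounded linear operator
\[
L\colon L^2(\partial\Omega)\to L^2(\Gamma),\qquad Lg := u^{(g)}|_\Gamma,
\]
whose adjoint (computed exactly as in the proof of Theorem~\ref{thm:runge}) is the ``virtual measurement'' map $A\colon L^2(\Gamma)\to L^2(\partial\Omega)$, $f\mapsto v|_{\partial\Omega}$ with $v$ solving \eqref{eq:runge_v}. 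Theorem~\ref{thm:runge} says $L$ has dense range, which is equivalent to $A=L^*$ being injective.

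Next I would introduce the two restricted measurement operators
\[
L_M\colon L^2(\partial\Omega)\to L^2(M),\quad g\mapsto u^{(g)}|_M,
\qquad
L_{\Gamma\setminus M}\colon L^2(\partial\Omega)\to L^2(\Gamma\setminus M),\quad g\mapsto u^{(g)}|_{\Gamma\setminus M}.
\]
Their adjoints are obtained from $A$ by pre-extending with zero: $L_M^* h = A\widetilde h$, where $\widetilde h\in L^2(\Gamma)$ is $h$ extended by $0$ off $M$, and analogously for $L_{\Gamma\setminus M}^*$. I then show that
\[
\mathrm{Range}(L_M^*)\not\subseteq \mathrm{Range}(L_{\Gamma\setminus M}^*).
\]
Indeed, if $A\widetilde h_1 = A\widetilde h_2$ with $h_1\in L^2(M)$ and $h_2\in L^2(\Gamma\setminus M)$, then injectivity of $A$ forces $\widetilde h_1=\widetilde h_2$, and since these have disjoint supports they must both vanish. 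Picking any nonzero $h_1\in L^2(M)$ (which exists since $M$ has positive surface measure) therefore produces an element of $\mathrm{Range}(L_M^*)$ not lying in $\mathrm{Range}(L_{\Gamma\setminus M}^*)$.

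Now I invoke the functional-analytic lemma (essentially Douglas' range-inclusion theorem, cf.\ the corresponding step in \cite[Corollary~3.5]{harrach2018fractional}): for bounded linear operators $A_1,A_2$ between Hilbert spaces with the same domain,
\[
\mathrm{Range}(A_1^*)\subseteq \mathrm{Range}(A_2^*)\quad\Longleftrightarrow\quad \exists\,C>0:\ \|A_1 x\|\le C\|A_2 x\|\ \forall x.
\]
Applying the contrapositive to $A_1=L_M$, $A_2=L_{\Gamma\setminus M}$ yields, for every $n\in\mathbb{N}$, some $\tilde g_n\in L^2(\partial\Omega)$ with $\|L_M\tilde g_n\|_{L^2(M)}\ge n^2\|L_{\Gamma\setminus M}\tilde g_n\|_{L^2(\Gamma\setminus M)}$. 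Rescaling $g_n:=\tilde g_n/(n\|L_{\Gamma\setminus M}\tilde g_n\|)$ (we may assume $\|L_{\Gamma\setminus M}\tilde g_n\|>0$, otherwise unique continuation gives $u^{(\tilde g_n)}\equiv 0$ and we perturb) gives
\[
\int_{\Gamma\setminus M}|u^{(g_n)}|^2\,ds=\frac{1}{n^2}\to 0,\qquad \int_M |u^{(g_n)}|^2\,ds\ge n^2\to\infty,
\]
which is the desired localized-potential sequence.

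The main obstacle is the verification that $\mathrm{Range}(L_M^*)\not\subseteq\mathrm{Range}(L_{\Gamma\setminus M}^*)$; once this is in place, everything else is a direct quotation of the duality lemma and a scaling argument. All other ingredients—Runge density, injectivity of $A$, and the rescaling—are either already available from Theorem~\ref{thm:runge} or standard.
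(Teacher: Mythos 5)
Your proof is correct, but it follows a genuinely different route from the paper's. The paper deduces the lemma from Theorem~\ref{thm:runge} in the most direct way possible: it takes the target function $\chi_M/\int_M\,ds$, obtains currents $\tilde g_n$ whose solutions satisfy $\int_M|u^{(\tilde g_n)}|^2\,ds\to 1$ and $\int_{\Gamma\setminus M}|u^{(\tilde g_n)}|^2\,ds\to 0$, and then rescales $g_n:=\tilde g_n\big(\int_{\Gamma\setminus M}|u^{(\tilde g_n)}|^2\,ds\big)^{-1/4}$ so that the integral over $M$ blows up while the one over $\Gamma\setminus M$ still tends to zero. You instead run the original operator-theoretic localized-potentials machinery of Gebauer \cite{gebauer2008localized}: restricted measurement operators $L_M$ and $L_{\Gamma\setminus M}$, non-inclusion of adjoint ranges via injectivity of $A$ and disjointness of supports, and the range-inclusion/norm-comparison duality. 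Both arguments rest on exactly the same analytic input, namely the injectivity of $A$ (equivalently, density of the range of $g\mapsto u^{(g)}|_\Gamma$) established in Theorem~\ref{thm:runge}; the paper's version is shorter and needs no extra functional-analytic lemma, while yours is more modular and does not require exhibiting a specific approximable target function. One small inaccuracy in your write-up: in the degenerate case $\|L_{\Gamma\setminus M}\tilde g_n\|=0$, the vanishing of the Dirichlet trace on $\Gamma\setminus M$ alone does not permit unique continuation (there is no Cauchy pair on that interface portion), so the claim $u^{(\tilde g_n)}\equiv 0$ is unjustified. The case is harmless anyway: the strict inequality $\|L_M\tilde g_n\|>n^2\|L_{\Gamma\setminus M}\tilde g_n\|$ already forces $L_M\tilde g_n\neq 0$, so if $L_{\Gamma\setminus M}\tilde g_n=0$ you can simply take $g_n:=n\tilde g_n/\|L_M\tilde g_n\|$ and both limits hold trivially.
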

\begin{proof}
Using the Runge approximation property in Theorem~\ref{thm:runge} we find a sequence $\tilde g_n\in L^2(\partial \Omega)$
so that the corresponding solutions $u^{(\tilde g_n)}$ fulfill
\[
u^{(\tilde g_n)}|_\Gamma \to \frac{\chi_M}{\int_M\,ds}\quad \text{ in } L^2(\Gamma).
\]
Hence
\begin{equation*}
\lim_{n\to \infty}\int_{M} |u^{(\tilde g_n)}|^2\,ds=1  \quad \text{ and }\quad
\lim_{n\to \infty}\int_{\Gamma\setminus M} |u^{(\tilde g_n)}|^2\,ds=0,
\end{equation*}
so that 
\[
g_n:=\frac{\tilde g_n}{\left(\int_{\Gamma\setminus M}\tilde u_n^2\,ds\right)^{1/4}}, 
\]
has the desired property
\begin{align*}
\lim_{n\to \infty}\int_{M} |u^{(g_n)}|^2\,ds &= \lim_{n\to \infty} \frac{\int_{M} |u^{(\tilde g_n)}|^2\,ds}{\left(\int_{\Gamma\setminus M}
|u^{(\tilde g_n)}|^2\,ds\right)^{1/2}}=\infty,\\
\lim_{n\to \infty}\int_{\Gamma\setminus M} |u^{(g_n)}|^2\,ds &= \lim_{n\to \infty} \left(\int_{\Gamma\setminus M}|u^{(\tilde g_n)}|^2\,ds\right)^{1/2} = 0.
\end{align*}
\end{proof}

Now,  we are ready  to proof Theorem \ref{stability}.
\begin{proof}[Proof of Theorem \ref{stability}]
Let $\mathcal{A}$ be a finite dimensional subspace of $L^\infty(\Gamma)$, $b>a>0$, and 
\[
\gamma_1,\gamma_2\in \mathcal{A}_{[a,b]}=\left\{\gamma\in \mathcal{A}:\quad   a\leq\gamma(x)\leq b \text{ for } x\in \Gamma\ \text{(a.e.)}\right\}.   
\]
For the ease of notation, we write in the following
\[
\norm{\gamma_1-\gamma_2}:=\norm{\gamma_1-\gamma_2}_{L^\infty(\Omega)} \quad \text{ and } \quad
\norm{g}:=\norm{g}_{L^2(\partial \Omega)}.
\]
Since $\Lambda(\gamma_1)$ and $\Lambda(\gamma_2)$ are self-adjoint, we have that
\begin{align*}
\lefteqn{\norm{\Lambda(\gamma_2)-\Lambda(\gamma_1)}_*}\\
&= 
  \sup_{\norm{g}=1} \left| \int_{\partial \Omega} g \left(\Lambda(\gamma_2)-\Lambda(\gamma_1)\right) g\, ds \right|\\
  &=  \sup_{\norm{g}=1} \max\left\{
\int_{\partial \Omega} g \left(\Lambda(\gamma_2)-\Lambda(\gamma_1)\right) g\, ds,
\int_{\partial \Omega} g \left(\Lambda(\gamma_1)-\Lambda(\gamma_2)\right) g\, ds
\right\}.
\end{align*}
Using the first inequality in the monotonicity relation \eqref{eqmono} in Lemma \ref{mono} in its original form, and
with $\gamma_1$ and $\gamma_2$ interchanged, we obtain for all $g\in L^2(\partial \Omega)$
\begin{align*}
\int_{\partial \Omega} g \left(\Lambda(\gamma_2)-\Lambda(\gamma_1)\right) g\, ds &\geq 
\int_{\Gamma} (\gamma_1-\gamma_2)|u_{\gamma_1}^{(g)}|^2,\\
\int_{\partial \Omega} g \left(\Lambda(\gamma_1)-\Lambda(\gamma_2)\right) g\, ds &\geq 
\int_{\Gamma}(\gamma_2-\gamma_1)|u_{\gamma_2}^{(g)}|^2,
\end{align*}
where $u_{\gamma_1}^{(g)},u_{\gamma_2}^{(g)}\in H^1(\Omega)$ denote the solutions of \eqref{transm} with Neumann data $g$ and Robin parameter $\gamma_1$ and $\gamma_2$, resp.
Hence, for $\gamma_1\neq \gamma_2$, we have
\[
\frac{\| \Lambda(\gamma_2)-\Lambda(\gamma_1) \|_*}{\| \gamma_1 -\gamma_2\|}\geq  
 \sup_{\| g \|=1}\Psi\left(g,\frac{\gamma_1-\gamma_2}{\| \gamma_1 -\gamma_2\|_{L^\infty(\Gamma)}},\gamma_1,\gamma_2\right),
\]
where (for $g\in L^2(\partial \Omega)$, $\zeta\in \mathcal{A}$, and $\kappa_1,\kappa_2\in \mathcal{A}_{[a,b]}$)
\begin{equation}\label{eq:Definition_Psi}
\Psi\left(g,\zeta,\kappa_1,\kappa_2\right):=
\max \left\{ \int_{\Gamma} \zeta |u_{\kappa_1}^{(g)}|^2 \,ds, \int_{\Gamma} (-\zeta) |u_{\kappa_2}^{(g)}|^2 \,ds  \right\}.
\end{equation}
Introduce the compact set
\begin{equation}\label{eq:Definition_CC}
\mathcal{C}=\left\{ \zeta\in \mathcal{A}:\quad \| \zeta \|_{L^\infty(\Gamma)}=1 \right\}.
\end{equation}
Then, we have
\begin{equation}\label{eq:stability_infsup}
\frac{\| \Lambda(\gamma_2)-\Lambda(\gamma_1) \|_*}{\| \gamma_1 -\gamma_2\|}\geq
\inf_{\substack{\zeta\in \mathcal{C}\\ \kappa_1,\kappa_2\in\mathcal{A}_{[a,b]}}} \sup_{\| g \|=1} \Psi(g,\zeta,\kappa_1,\kappa_2).
\end{equation}
The assertion of Theorem \ref{stability} follows if we can show that the right hand side of \eqref{eq:stability_infsup}
is positive. Since $\Psi$ is continuous, the function
\[
(\zeta,\kappa_1,\kappa_2)\mapsto \sup_{\| g \|=1} \Psi(g,\zeta,\kappa_1,\kappa_2)
\]
is semi-lower continuous, so that it attains its minimum on  the compact set 
$\mathcal{C}\times\mathcal{A}_{[a,b]}\times \mathcal{A}_{[a,b]}$.
Hence, to prove Theorem \ref{stability}, it suffices to show that
\[
\sup_{\| g \|=1} \Psi(g,\zeta,\kappa_1,\kappa_2)>0 \quad \text{ for all } (\zeta,\kappa_1,\kappa_2)\in \mathcal{C}\times\mathcal{A}_{[a,b]}\times \mathcal{A}_{[a,b]}.
\]

To show this, let $(\zeta,\kappa_1,\kappa_2)\in \mathcal{C}\times\mathcal{A}_{[a,b]}\times \mathcal{A}_{[a,b]}$.
Since $\norm{\zeta}_{L^\infty(\Gamma)}=1$, there exists a subset $M\subseteq \Gamma$ with positive boundary measure such that
either
\[
\text{(a)}\ \zeta(x)\geq \frac{1}{2}\text{ for all } x\in M,\quad \text{ or } \quad
\text{(b)}\ -\zeta(x)\geq \frac{1}{2}\text{ for all } x\in M.
\]
%\begin{enumerate}
%\item[(a)] $\zeta(x)\geq \frac{1}{2}$ for all $x\in M$, or
%\item[(b)] $-\zeta(x)\geq \frac{1}{2}$ for all $x\in M$.
%\end{enumerate}
In case (a), we use the localized potentials sequence in lemma~\ref{lemma:locpot} to obtain a boundary current
$\hat g\in L^2(\partial \Omega)$ with 
\[
\int_M \left|u^{(\hat g)}_{\kappa_1}\right|^2\, ds \geq 2 \quad \text{ and } \quad \int_{\Gamma\setminus M} \left|u^{(\hat g)}_{\kappa_1}\right|^2\, ds \leq \frac{1}{2},
\]
so that (using again $\norm{\zeta}_{L^\infty(\Gamma)}=1$)
\[
\Psi\left(\hat g,\zeta,\kappa_1,\kappa_2\right)\geq \int_\Gamma \zeta \left|u^{(\hat g)}_{\kappa_1}\right|^2\, ds \geq \frac{1}{2} \int_M \left|u^{(\hat g)}_{\kappa_1}\right|^2\, ds
- \int_{\Gamma\setminus M} \left|u^{(\hat g)}_{\kappa_1}\right|^2\, ds \geq \frac{1}{2}.
\]
In case (b), we can analogously use a localized potentials sequence for $\kappa_2$, and 
find $\hat g\in L^2(\partial \Omega)$ with
\[
\Psi\left(\hat g,\zeta,\kappa_1,\kappa_2\right)\geq \int_\Gamma (-\zeta) \left|u^{(\hat g)}_{\kappa_2}\right|^2\, ds \geq \frac{1}{2} \int_M \left|u^{(\hat g)}_{\kappa_2}\right|^2\, ds
- \int_{\Gamma\setminus M} \left|u^{(\hat g)}_{\kappa_2}\right|^2\, ds \geq \frac{1}{2}.
\]
Hence, in both cases, 
\[
\sup_{\| g \|=1} \Psi(g,\zeta,\kappa_1,\kappa_2)\geq  \Psi\left(\frac{\hat g}{\norm{\hat g}} ,\zeta,\kappa_1,\kappa_2\right)=
\frac{1}{\norm{\hat g}^2} \Psi(\hat g,\zeta,\kappa_1,\kappa_2)>0,
\]
so that Theorem \ref{stability} is proven.
\end{proof}

%%%%%%%%%%%%%%%%%%%%%%%%%%%%%%%%%

\section{Quantitative estimate of the Lipschitz stability constant}\label{section:quantitative}

Theorem \ref{stability} shows that there exists a Lipschitz stability constant $C>0$ with
\[
\| \gamma_1-\gamma_2 \|_{L^{\infty}(\Gamma)}\leq   C\| \Lambda(\gamma_1)-\Lambda(\gamma_2) \|_{*}\quad \text{ for all } \gamma_1,\gamma_2\in \mathcal{A}_{[a,b]},
\]
where the constant $C>0$ depends on the a-priori data only, i.e., on the sets $\Omega_1$ and $\Omega$, the background conductivity $\sigma$,
the finite dimensional subspace $\mathcal A\subset L^\infty(\Omega)$, and the a-priori bounds $b>a>0$.

Similar results on the existence of Lipschitz stability constants are known for several inverse coefficient problems, cf.\ the extensive reference list in the introduction. Lipschitz stability is often proven by constructive arguments such as quantitative unique continuation, and for some applications this allows 
to quantify the asymptotic behavior of the stability constants (cf., e.g., \cite[Corollary 2.5]{sincich2007lipschitz}).
However, to the knowledge of the authors, it is a long-standing unsolved problem how to explicitly calculate the stability constant from the a-priori data.

For practical purposes it is highly desirable to know the Lipschitz stability constant $C>0$ 
for a given setting. Inverse coefficient problems are usually extremely ill-posed, and only become well-posed (e.g., in the sense of Lipschitz stability as considered herein), when the unknown coefficient can a-priori be restricted to belong to a compact subset 
(e.g.\ the set of piecewise constant functions on a given resolution with a-priori known upper and lower bounds). Choosing a finer resolution leads to a less stable,  more noise-sensitive, reconstruction problem. Hence, a quantitative evaluation of the Lipschitz stability constant might help in 
estimating what resolution is practically feasible, cf.~\cite{harrach2015resolution} for results on a related problem concerning inclusion detection in EIT.

In this section, we show that monotonicity arguments allow a quantitative estimation of the Lipschitz stability constant by a relatively simple
implementation that requires solving a finite number of well-posed PDEs. To the knowledge of the authors, this is the 
first result on quantifying the Lipschitz stability constants for given a-priori data.

For the ease of presentation, we restrict ourself to the case that $\mathcal A$ consists of piecewise constant functions
on a given partition $\Gamma=\bigcup_{m=1}^M \Gamma_m$ into finitely many subsetes $\Gamma_m\subseteq \Gamma$ with positive boundary measure, i.e.,
\[
\mathcal A:=\{ \gamma=\sum_{m=1}^M \gamma_m \chi_{\Gamma_m},\ \text{ with } \gamma_1,\ldots,\gamma_M\in \R\}\subset L^\infty(\Gamma),
\]
and, for $b>a>0$, $\mathcal A_{[a,b]}$ is the subset of those $\gamma\in \mathcal A$ with $a\leq \gamma_m\leq b$ for all $m=1,\ldots,M$.
The authors believe that the following approach can also be extended to other finite-dimensional subspaces $\mathcal A\subseteq L^\infty(\Omega)$.

For our quantitative estimate of the Lipschitz stability constant, we require a finite number of localized potentials,
and show how to compute them.

\begin{lemma}\label{lemma:gkm}
For $m=1,\ldots,M$, and $k=1,\ldots,K$ with $K:=[4(\frac{b}{a}-1)]+1$, we define the Robin coefficient functions
\[
\gamma^{(km)}\in L^\infty_+(\Gamma) \quad \text{ by setting }
\quad \gamma^{(km)}:%=\sum_{m\neq j} \frac{a}{2} \chi_m + (k+5)\frac{a}{4} \chi_j
=\left\{ \begin{array}{l l} (k+5)\frac{a}{4} & \text{ on $\Gamma_m$,}\\ \frac{a}{2} & \text{ else.}\end{array}\right.
\]

\begin{enumerate}
\item[(a)] There exist boundary data $g^{(km)}\in L^2(\partial \Omega)$ so that the corresponding solutions
$u_{km}\in H^1(\Omega)$ of \eqref{transm} with $\gamma=\gamma^{(km)}$ and $g=g^{(km)}$ fulfill
\begin{equation}\label{eq:condition_gkm}
\frac{1}{2} \int_{\Gamma_m}  |u_{km}|^2\,ds - \left( \frac{2b}{a}-1\right) \int_{\Gamma\setminus \Gamma_m}   |u_{km}|^2\,ds\geq 1
\end{equation}
\item[(b)] $g^{(km)}\in L^2(\partial \Omega)$ can be calculated by solving a finite number of well-posed partial differential equations.
\end{enumerate}
\end{lemma}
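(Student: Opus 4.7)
My plan is to prove (a) by a direct application of Lemma~\ref{lemma:locpot}, and to prove (b) by exhibiting a Galerkin-type algorithm whose termination is certified by the continuity of the forward map together with the existence result from (a).

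For (a), I would apply Lemma~\ref{lemma:locpot} with $M:=\Gamma_m$ and $\gamma:=\gamma^{(km)}$ to obtain a sequence $(g_n)_{n\in\mathbb N}\subset L^2(\partial\Omega)$ whose corresponding solutions $u_n$ satisfy $\int_{\Gamma_m}|u_n|^2\,ds\to\infty$ and $\int_{\Gamma\setminus\Gamma_m}|u_n|^2\,ds\to 0$. For $n$ large enough so that the first integral exceeds $4$ and the second is below $\tfrac{1}{4(2b/a-1)}$, the left hand side of \eqref{eq:condition_gkm} evaluated at $g_n$ is at least $2-\tfrac14>1$, and we may set $g^{(km)}:=g_n$.

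For (b), I would observe that the map $g\mapsto \Phi(g):=\tfrac12\int_{\Gamma_m}|u^{(g)}_{\gamma^{(km)}}|^2\,ds-(\tfrac{2b}{a}-1)\int_{\Gamma\setminus\Gamma_m}|u^{(g)}_{\gamma^{(km)}}|^2\,ds$ is a continuous quadratic form on $L^2(\partial\Omega)$, by well-posedness of \eqref{eqv} and continuity of the trace from $H^1(\Omega)$ into $L^2(\Gamma)$. Fix once and for all an orthonormal basis $(\phi_j)_{j\in\mathbb N}$ of $L^2(\partial\Omega)$. For each $N\in\mathbb N$, I would solve the $N$ well-posed forward problems to obtain $u_j^{(N)}:=u_{\gamma^{(km)}}^{(\phi_j)}$, $j=1,\ldots,N$, and then assemble the symmetric $N\times N$ matrix
\[
[M_N]_{ij}:=\tfrac12\int_{\Gamma_m}u_i^{(N)}u_j^{(N)}\,ds-\left(\tfrac{2b}{a}-1\right)\int_{\Gamma\setminus\Gamma_m}u_i^{(N)}u_j^{(N)}\,ds,
\]
so that linearity of \eqref{transm} in $g$ gives $\Phi\bigl(\sum_{j\le N}c_j\phi_j\bigr)=c^\top M_N c$. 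I would then compute the largest eigenvalue $\mu_N$ of $M_N$; as soon as $\mu_N>0$, the associated unit eigenvector $c^*$ rescaled to $c:=c^*/\sqrt{\mu_N}$ satisfies $c^\top M_N c=1$, and $g^{(km)}:=\sum_{j\le N}c_j\phi_j$ satisfies \eqref{eq:condition_gkm}.

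The main obstacle is to certify that this termination criterion is met after finitely many increments of $N$. To this end, let $g^*$ be any boundary current produced by part~(a), so $\Phi(g^*)\ge 1$. Denoting by $P_N$ the $L^2(\partial\Omega)$-orthogonal projection onto $\operatorname{span}\{\phi_1,\ldots,\phi_N\}$, we have $P_N g^*\to g^*$ in $L^2(\partial\Omega)$, and hence by continuity of $\Phi$, $\Phi(P_Ng^*)\to\Phi(g^*)\ge 1$; in particular $\|P_Ng^*\|>0$ eventually, and therefore
\[
\mu_N\ge\frac{\Phi(P_Ng^*)}{\|P_Ng^*\|_{L^2(\partial\Omega)}^2}>0
\]
for all sufficiently large $N$. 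Thus the algorithm terminates with a finite total number of PDE solves, producing the desired $g^{(km)}$. Note that no explicit a-priori bound on the required $N$ is available---nor needed---since termination is detected on the fly by monitoring the sign of $\mu_N$; the bound ultimately hinges on the non-quantitative Runge approximation underlying Lemma~\ref{lemma:locpot}.
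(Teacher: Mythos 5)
Your proof is correct. Part (a) is exactly the paper's argument: the authors likewise note that (a) ``immediately follows'' from Lemma~\ref{lemma:locpot} applied with $M=\Gamma_m$ and $\gamma=\gamma^{(km)}$. For part (b), however, you take a genuinely different constructive route. The paper introduces the auxiliary operator $A:L^2(\Gamma)\to L^2(\partial\Omega)$ from the proof of Theorem~\ref{thm:runge}, recalls that $A^*g=u|_\Gamma$ has dense range, and runs the CGNE iteration on the ill-posed equation $A^*g\stackrel{!}{=}4\chi_{\Gamma_m}$; each iteration costs one solve of \eqref{transm} and one of the auxiliary problem \eqref{eq:gkm_PDE1}, and the known residual-convergence property of CGNE for targets in $\overline{\mathcal{R}(A^*)}$ guarantees that \eqref{eq:condition_gkm} is met after finitely many steps. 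You instead work directly with the quadratic form $\Phi$, restrict it to the finite sections spanned by an orthonormal basis of $L^2(\partial\Omega)$, and test positivity of the largest eigenvalue of the assembled matrix $M_N$; termination is certified by part (a) together with continuity of $\Phi$ and $P_Ng^*\to g^*$. Your scheme only needs the forward solution operator (no adjoint/auxiliary PDE, no regularization theory for ill-posed equations) plus standard symmetric eigenvalue computations, and the forward solves for $\phi_1,\ldots,\phi_N$ can be reused as $N$ grows; the paper's scheme is more targeted, in that it actively steers $u|_\Gamma$ toward a prescribed localized profile rather than maximizing $\Phi$ blindly over growing subspaces, which one would expect to terminate in fewer solves in practice. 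Both approaches share the essential feature emphasized in Section~\ref{section:quantitative}: the number of well-posed PDE solves is finite and detected on the fly, but no a priori bound is available since both ultimately rest on the non-quantitative Runge approximation.
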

\begin{proof}
Note that $\gamma^{(km)}\in \mathcal A\cap L^\infty_+(\Gamma)$ but $\gamma^{(km)}\not\in \mathcal A_{[a,b]}$.
(a) immediately follows from the localized potentials result in lemma~\ref{lemma:locpot}. To prove (b), we use a similar approach as in the construction of localized
potentials in \cite[Thm.~2.10]{gebauer2008localized}. 
For $m\in \{1,\ldots,M\}$, and $k\in \{1,\ldots,K\}$, we introduce as in the proof of theorem~\ref{thm:runge}
\[
A:  L^2(\Gamma)\to  L^2(\partial\Omega), \quad f\mapsto A f:= v_{|\partial\Omega}, 
\]
where $v\in H^1(\Omega)$  solves 
\begin{equation*}
\int_\Omega\sigma\nabla v\cdot\nabla w\,dx+\int_\Gamma \gamma^{(km)} v w\,ds=\int_{\Gamma}f w\,ds\quad \text{ for all } w\in H^1(\Omega),
\end{equation*}
which is the variational form of the PDE
\begin{equation}
\label{eq:gkm_PDE1}
\left\{
\begin{aligned}
-\text{div}(\sigma \nabla v) & = 0\quad \text{ in }\Omega_1\cup \Omega_2,
\\
 \sigma\partial_{\nu} v&= 0\quad \text{ on  }\partial \Omega,\\
 \llbracket v\rrbracket&=  0 \quad\text{ on }  \Gamma,\\
 \llbracket \sigma\partial_{\nu} v\rrbracket &= \gamma^{(km)} v - f\quad\text{ on }  \Gamma,
\end{aligned}
\right.
\end{equation}

We have shown in the proof of theorem~\ref{thm:runge} that the adjoint of $A$ is given by
\[
A^*:\ L^2(\partial \Omega)\to L^2(\Gamma),\quad A^* g=u|_{\Gamma},
\]
where $u$ solves \eqref{transm} with $\gamma=\gamma^{(km)}$, and that $A^*$ has dense range.

Solving the ill-posed linear equation 
\[
A^* g \stackrel{!}{=} 4 \chi_{\Gamma_m}
\]
with the conjugate gradient (CGNE) method (cf., e.g., \cite[Sect.~7]{engl1996regularization}, or the recent book \cite[III.15]{hanke2017taste}) requires an application of $A$ and $A^*$ in each iteration step, i.e., one well-posed PDE solution of \eqref{transm} and \eqref{eq:gkm_PDE1}, each. Since $4 \chi_{\Gamma_m}\in \overline{\mathcal{R}(A^*)}$, the CGNE method is known to yield a (possibly unbounded) sequence of iterates $(g_n)_{n\in \N}\subset L^2(\partial \Omega)$ for which the residua converge to zero, i.e.,
\[
A^* g_n\to 4 \chi_{\Gamma_m}.
\]
Hence, the solutions $u^{(n)}\in H^1(\Omega)$ of \eqref{transm} with $\gamma=\gamma^{(km)}$ and $g=g_n$
fulfill
\[
\frac{1}{2} \int_{\Gamma_m}  |u^{(n)}|^2\,ds - \left( \frac{2b}{a}-1\right) \int_{\Gamma\setminus \Gamma_m}   |u^{(n)}|^2\,ds\to 2,
\]
so that after finitely many iteration steps, \eqref{eq:condition_gkm} is fulfilled.
\end{proof}

We can now formulate our quantitative estimate of the Lipschitz stability constant.
For the following theorem note that the functions $g^{(km)}$ (defined in lemma~\ref{lemma:gkm}) depend only
on the a priori data (i.e., on the sets $\Omega_1$ and $\Omega$, the background conductivity $\sigma$,
the finite dimensional subspace of piecewise constant functions $\mathcal A\subset L^\infty(\Omega)$, and the a-priori bounds $b>a>0$),
and that $g^{(km)}$ can be explicitly calculated from the a priori data by solving a finite number of well-posed PDEs.

\begin{theorem}\label{thm:quanitative_stability}
Let $g^{(km)}\in L^2(\partial \Omega)$ be defined as in lemma~\ref{lemma:gkm}. Set
\[
G:=\max\{ \norm{ g^{(km)}}^2:\ k=1,\ldots,K,\ m=1,\ldots,M \}.
\]
Then
\begin{equation}\label{eq:quanitative_stability}
\| \gamma_1-\gamma_2 \|_{L^{\infty}(\Gamma)}\leq   \frac{1}{G}\| \Lambda(\gamma_1)-\Lambda(\gamma_2) \|_{*}
\quad \text{ for all } \quad \gamma_1,\gamma_2\in \mathcal{A}_{[a,b]}. 
\end{equation}
\end{theorem}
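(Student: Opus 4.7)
The plan is to adapt the proof of Theorem \ref{stability} by replacing its compactness argument with an explicit, quantitative choice of boundary data supplied by Lemma \ref{lemma:gkm}. First I would fix $\gamma_1,\gamma_2\in\mathcal A_{[a,b]}$, set $t:=\norm{\gamma_1-\gamma_2}_{L^\infty(\Gamma)}$, and pick an index $m\in\{1,\ldots,M\}$ on which the $L^\infty$-norm is attained; without loss of generality $(\gamma_1)_m-(\gamma_2)_m=t>0$, as one can otherwise swap $\gamma_1$ and $\gamma_2$. Self-adjointness of the Neumann-to-Dirichlet operator together with the first monotonicity inequality in Lemma \ref{mono} (applied with $\gamma_1$ and $\gamma_2$ interchanged, exactly as in the proof of Theorem \ref{stability}) yields
\[
\|g\|^2\,\|\Lambda(\gamma_1)-\Lambda(\gamma_2)\|_{*}\;\geq\;\int_{\partial\Omega}g\bigl(\Lambda(\gamma_2)-\Lambda(\gamma_1)\bigr)g\,ds\;\geq\;\int_\Gamma(\gamma_1-\gamma_2)\bigl|u^{(g)}_{\gamma_1}\bigr|^2\,ds
\]
for every $g\in L^2(\partial\Omega)$; I would then plug in $g=g^{(km)}$ for an appropriately chosen $k\in\{1,\ldots,K\}$.

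The heart of the argument is to show that $u:=u^{(g^{(km)})}_{\gamma_1}$ inherits a localization property analogous to \eqref{eq:condition_gkm} from $u_{km}=u^{(g^{(km)})}_{\gamma^{(km)}}$. These two $H^1(\Omega)$-functions share the same Neumann datum $g^{(km)}$ on $\partial\Omega$ but satisfy different Robin conditions on $\Gamma$, so the comparison is not pointwise but integrated. I expect this step to be executed by a second application of Lemma \ref{mono} to the pair $(\gamma_1,\gamma^{(km)})$, exploiting the specific design of $\gamma^{(km)}$: namely, that $\gamma^{(km)}=a/2<a\leq\gamma_1$ on $\Gamma\setminus\Gamma_m$, while on $\Gamma_m$ the $K$ values $(k+5)a/4$ form an $a/4$-spaced grid that covers any $(\gamma_1)_m\in[a,b]$. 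The coefficient $(2b/a-1)$ in \eqref{eq:condition_gkm} and the precise count $K=\lfloor 4(b/a-1)\rfloor+1$ appear calibrated exactly so that, after the comparison, $u$ retains a localization of the same quantitative form, with the factor $\norm{g^{(km)}}^2$ entering in such a way that the final prefactor $1/G$ emerges once the bound $\norm{g^{(km)}}^2\leq G$ is used.

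Once the localization for $u$ is in hand, the conclusion is immediate. Using $(\gamma_1-\gamma_2)=t$ on $\Gamma_m$ and $|\gamma_1-\gamma_2|\leq t$ on $\Gamma\setminus\Gamma_m$,
\[
\int_\Gamma(\gamma_1-\gamma_2)|u|^2\,ds\;\geq\;t\Bigl(\int_{\Gamma_m}|u|^2\,ds-\int_{\Gamma\setminus\Gamma_m}|u|^2\,ds\Bigr),
\]
and substituting into the first display together with $\norm{g^{(km)}}^2\leq G$ delivers \eqref{eq:quanitative_stability}. The main obstacle is the transfer step; this is where the somewhat baroque numerical choices made in the definition of $\gamma^{(km)}$ (the value $a/2$ off $\Gamma_m$, the $a/4$-spacing on $\Gamma_m$, the count $K$, and the calibration $(2b/a-1)$) all come into play together, while the remaining ingredients are routine manipulation of the monotonicity estimate.
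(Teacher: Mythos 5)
Your proposal follows essentially the same route as the paper's proof: the paper likewise derives from Lemma~\ref{mono} the comparison $\int_\Gamma\delta|u_\gamma^{g}|^2\,ds\geq\int_\Gamma\delta|u_{\gamma+\delta}^{g}|^2\,ds$ and applies it with $\delta=\gamma^{(km)}-\gamma$ (choosing $k$ so that $(k+3)\tfrac a4\leq\gamma|_{\Gamma_m}<(k+4)\tfrac a4$) to transfer the localization \eqref{eq:condition_gkm} from $u_{km}$ to $u_{\gamma}^{g^{(km)}}$ --- exactly the ``heart of the argument'' you describe, with the constants $a/2$, $a/4$, $K$ and $2b/a-1$ playing the roles you predict --- and then concludes via the functional $\Psi$ from the proof of Theorem~\ref{stability}, which amounts to your direct choice of the patch $\Gamma_m$ where $|\gamma_1-\gamma_2|$ attains its maximum. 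The only caveat is that both your computation and the paper's own chain of inequalities actually produce the Lipschitz constant $G$ rather than $1/G$ (one obtains $\sup_{\norm{g}=1}\Psi\geq 1/G$ and hence $\norm{\gamma_1-\gamma_2}\leq G\norm{\Lambda(\gamma_1)-\Lambda(\gamma_2)}_*$), so the prefactor $1/G$ in \eqref{eq:quanitative_stability} is an inconsistency in the stated theorem rather than a defect of your approach.
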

\begin{proof}
We will first formulate an useful monotonicity-based inequality that allows us to modify the Robin coefficient in certain integral energy expressions (in (a)).
We then use it (in (b)) to estimate energy expressions for arbitrary Robin coefficients $\gamma\in \mathcal A_{[a,b]}$ by similar expressions that only involve the finitely many special Robin coefficients $\gamma^{(km)}$ defined in lemma \ref{lemma:gkm}.
From this we can then prove theorem \ref{thm:quanitative_stability}.

\begin{enumerate}
\item[(a)]
For all $g\in L^2(\partial \Omega)$, $\gamma\in L^\infty_+(\Gamma)$ and $\delta\in L^\infty(\Gamma)$ with $\gamma+\delta\in L_+^\infty(\Gamma)$, 
\begin{equation}
\label{eqmono_reformulated}
\int_\Gamma \delta |u_\gamma^g|^2\,ds\geq \int_\Gamma \delta |u_{\gamma+\delta}^g|^2\,ds.
\end{equation}
This follows from the first inequality in lemma~\ref{mono} with $\gamma_2:=\gamma+\delta$ and $\gamma_1:=\gamma$, and from using the same inequality again with interchanged roles of $\gamma_1$ and $\gamma_2$.
\item[(b)] Let $m\in \{1,\ldots,M\}$ and $\gamma\in \mathcal{A}_{[a,b]}$. 
Since $K$ (defined in lemma~\ref{lemma:gkm}) fulfills $b< (K+4)\frac{a}{4}$, 
there exists $k\in \{1,\ldots,K\}$ so that $\gamma_m:=\gamma|_{\Gamma_m}\in [a,b]$ fulfills
\[
(k+3)\frac{a}{4}\leq \gamma_m< (k+4)\frac{a}{4}.
\]

Using \eqref{eqmono_reformulated}, $\frac{a}{4}<(k+5)\frac{a}{4}- \gamma_m\leq \frac{a}{2}$, and $-\frac{a}{2}\geq \frac{a}{2}-\gamma\geq \frac{a}{2}-b$, we obtain (with $\gamma^{(km)}$, $g^{(km)}$, and $u_{km}$ defined as in lemma~\ref{lemma:gkm})
\begin{align*}
\lefteqn{ \int_{\Gamma_m} |u_\gamma^{g^{(km)}}|^2\,ds - \int_{\Gamma\setminus \Gamma_m} |u_\gamma^{g^{(km)}}|^2\,ds}\\
& = \frac{2}{a} \left(  \int_{\Gamma_m} \frac{a}{2} |u_\gamma^{g^{(km)}}|^2\,ds - \int_{\Gamma\setminus \Gamma_m}  \frac{a}{2} |u_\gamma^{g^{(km)}}|^2\,ds\right)\\
& \geq \frac{2}{a} \left(  \int_{\Gamma_m} ((k+5)\frac{a}{4}- \gamma_m) |u_\gamma^{g^{(km)}}|^2\,ds + \int_{\Gamma\setminus \Gamma_m}  \left(\frac{a}{2} - \gamma\right) |u_\gamma^{g^{(km)}}|^2\,ds\right)\\
& = \frac{2}{a}\int_\Gamma (\gamma^{(km)} - \gamma)  |u_\gamma^{g^{(km)}}|^2\,ds
 \geq \frac{2}{a}\int_\Gamma (\gamma^{(km)} - \gamma)  |u_{km}|^2\,ds\\
& = \frac{2}{a} \left(  \int_{\Gamma_m} ((k+5)\frac{a}{4}- \gamma_m) |u_{km}|^2\,ds + \int_{\Gamma\setminus \Gamma_m}  \left(\frac{a}{2} - \gamma\right) |u_{km}|^2\,ds\right)\\
& \geq \frac{2}{a} \left(   \int_{\Gamma_m} \frac{a}{4} |u_{km}|^2\,ds + \int_{\Gamma\setminus \Gamma_m}  \left(\frac{a}{2} - b\right) |u_{km}|^2\,ds\right)\\
&\geq \frac{1}{2} \int_{\Gamma_m}  |u_{km}|^2\,ds - \left( \frac{2b}{a}-1\right)  \int_{\Gamma\setminus \Gamma_m}   |u_{km}|^2\,ds \geq 1.
\end{align*}
This shows that for all $\gamma\in \mathcal{A}_{[a,b]}$ and all $m\in \{1,\ldots,M\}$
\begin{align}
\label{eq:energy_estimate_quant} \lefteqn{\sup_{\norm{g}=1} \left( \int_{\Gamma_m} |u_\gamma^{g}|^2\,ds - \int_{\Gamma\setminus \Gamma_m} |u_\gamma^{g}|^2\,ds \right)}\\
\nonumber & = \sup_{0\neq g\in L^2(\partial \Omega)} \frac{1}{\norm{g}^2} \left( \int_{\Gamma_m} |u_\gamma^{g}|^2\,ds - \int_{\Gamma\setminus \Gamma_m} |u_\gamma^{g}|^2\,ds \right)\geq \frac{1}{G}.
\end{align}
\item[(c)] It remains to show that \eqref{eq:energy_estimate_quant} implies \eqref{eq:quanitative_stability}. 
With $\Psi$ and $\mathcal C$ defined in \eqref{eq:Definition_Psi} and \eqref{eq:Definition_CC} in the proof of Theorem \ref{stability},
it suffices to show that
\begin{equation}\label{eq:quanitative_stability_aux}
\sup_{\| g \|=1} \Psi(g,\zeta,\kappa_1,\kappa_2)\geq \frac{1}{G} \quad \text{ for all } (\zeta,\kappa_1,\kappa_2)\in \mathcal{C}\times\mathcal{A}_{[a,b]}\times \mathcal{A}_{[a,b]}.
\end{equation}

Since $\mathcal A$ contains only piecewise-constant functions, for every $\zeta\in \mathcal C$ there must exist 
a boundary part $\Gamma_m$ with either 
\[
\zeta|_{\Gamma_m}=1, \quad \text{ or } \quad \zeta|_{\Gamma_m}=-1,
\]
and in both cases $-1\leq \zeta|_{\Gamma\setminus \Gamma_m}\leq 1$. Hence, using \eqref{eq:Definition_Psi} and \eqref{eq:energy_estimate_quant}, we obtain for the case
$\zeta|_{\Gamma_m}=1$:
\begin{align*}
\sup_{\| g \|=1} \Psi(g,\zeta,\kappa_1,\kappa_2)
&\geq \sup_{\| g \|=1} \int_{\Gamma} \zeta |u_{\kappa_1}^{(g)}|^2 \,ds\\
&\geq \sup_{\| g \|=1} \left( \int_{\Gamma_m} |u_{\kappa_1}^{g}|^2\,ds - \int_{\Gamma\setminus \Gamma_m} |u_{\kappa_1}^{g}|^2\,ds\right)
\geq \frac{1}{G},
\end{align*}
and for the case $\zeta|_{\Gamma_m}=-1$:
\begin{align*}
\sup_{\| g \|=1} \Psi(g,\zeta,\kappa_1,\kappa_2)
&\geq \sup_{\| g \|=1} \int_{\Gamma} (-\zeta) |u_{\kappa_2}^{(g)}|^2 \,ds\\
&\geq \sup_{\| g \|=1} \left( \int_{\Gamma_m} |u_{\kappa_2}^{g}|^2\,ds - \int_{\Gamma\setminus \Gamma_m} |u_{\kappa_2}^{g}|^2\,ds\right)
\geq \frac{1}{G},
\end{align*}
so that \eqref{eq:quanitative_stability_aux} and thus \eqref{eq:quanitative_stability} is proven.
\end{enumerate}
\end{proof}

%%%%%%%%%%%%%%%%%%%%%%%%%%%%%%%%%%

\section{The minimization problem}
In order to solve numerically the inverse problem \eqref{invp}, we consider only Robin coefficients in the admissible set
\[
P_{ad}:=\left\{ \gamma\in  C(\Gamma):  0<c_0\leq\gamma(x)\leq c_1 \text{ for  } x\in \Gamma, \text{ where } c_0, c_1 \text{ are  constants}  \right\},
\]
and study the following minimization problem:
\begin{equation}
\label{minp}
 \min_{\gamma\in \mathcal{P}_{ad}} J(\gamma)=\frac{1}{2}\int_{\partial\Omega} (u(\gamma)-u_a)^2\,ds+\frac{\lambda}{2}\int_\Gamma \gamma^2\,ds,
 \end{equation}
 where $u(\gamma)$ solves the variational equation \eqref{eqv}, $u_a$  is the measured dada and $\lambda$ is  a regularization parameter  that must be chosen by the user.
 It is obvious  that the optimization  problem \eqref{minp} is equivalent to the inverse problem \eqref{invp} when the dada $u_a$ are exact  and the regularization parameter $\lambda=0$. When the data $u_a$ are contaminated  with  measurement errors,  the inverse problem  \eqref{invp} may not have a solution. However,  the optimization problem \eqref{minp}  has always  a solution, and  it may not     be equivalent  to the  solution of the inverse problem.
 \begin{theorem}
 The minimization problem \eqref{minp}  has at least  one solution.
 \end{theorem}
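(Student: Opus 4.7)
The plan is to apply the direct method of the calculus of variations. Since $J(\gamma)\geq 0$ for every admissible $\gamma$, the infimum $m:=\inf_{\gamma\in P_{ad}} J(\gamma)\geq 0$ is finite, and I can pick a minimizing sequence $(\gamma_n)\subset P_{ad}$ with $J(\gamma_n)\to m$. Because $c_0\leq \gamma_n\leq c_1$ pointwise, the sequence is bounded in $L^\infty(\Gamma)$, so by Banach--Alaoglu there is a subsequence (not relabelled) weakly-$*$ converging in $L^\infty(\Gamma)$ to some $\gamma^*$, and the pointwise bounds $c_0\leq \gamma^*\leq c_1$ pass to the limit since the order interval is convex and closed in $L^2(\Gamma)$. (The formal continuity requirement in $P_{ad}$ is understood up to its weak-$*$ closure; only the essential bounds are needed for what follows.)

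Next I write $u_n:=u(\gamma_n)$ and test the variational equation \eqref{eqv} with $w=u_n$: using $\sigma\geq \min(\sigma_1,\sigma_2)>0$ and $\gamma_n\geq c_0>0$, a standard coercivity argument plus the trace inequality yields a uniform bound on $\|u_n\|_{H^1(\Omega)}$. Extracting a further subsequence, $u_n\rightharpoonup u^*$ in $H^1(\Omega)$, and by the compactness of the trace $H^1(\Omega)\hookrightarrow L^2(\partial\Omega)$ and $H^1(\Omega)\hookrightarrow L^2(\Gamma)$, both boundary traces converge strongly in $L^2$.

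The main step is to pass to the limit in \eqref{eqv} to identify $u^*=u(\gamma^*)$. The volume term $\int_\Omega \sigma\nabla u_n\cdot\nabla w\,dx$ converges to $\int_\Omega \sigma\nabla u^*\cdot\nabla w\,dx$ by weak convergence, and the Neumann datum is independent of $n$. The delicate term is the Robin term $\int_\Gamma \gamma_n u_n w\,ds$, which I split as
\[
\int_\Gamma \gamma_n u_n w\,ds=\int_\Gamma \gamma_n(u_n-u^*)w\,ds+\int_\Gamma \gamma_n u^* w\,ds.
\]
The first piece tends to zero because $\gamma_n$ is bounded in $L^\infty(\Gamma)$ while $u_n\to u^*$ strongly in $L^2(\Gamma)$; the second piece converges to $\int_\Gamma \gamma^* u^* w\,ds$ since $u^* w\in L^1(\Gamma)$ and $\gamma_n\rightharpoonup^* \gamma^*$ in $L^\infty(\Gamma)$. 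Hence $u^*$ solves \eqref{eqv} with coefficient $\gamma^*$, so by uniqueness $u^*=u(\gamma^*)$.

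Finally, for lower semicontinuity of $J$: strong convergence of traces gives $\int_{\partial\Omega}(u_n-u_a)^2\,ds\to \int_{\partial\Omega}(u^*-u_a)^2\,ds$, and weak convergence $\gamma_n\rightharpoonup \gamma^*$ in $L^2(\Gamma)$ yields $\int_\Gamma(\gamma^*)^2\,ds\leq \liminf_n \int_\Gamma \gamma_n^2\,ds$ by weak lower semicontinuity of the $L^2$-norm. Combining these gives $J(\gamma^*)\leq \liminf_n J(\gamma_n)=m$, so $\gamma^*$ is a minimizer. The hardest point in the whole argument is the passage to the limit in the Robin term, which is a product of two sequences neither of which converges strongly in the relevant topology; it is precisely the compactness of the trace embedding that upgrades one factor to strong convergence and saves the argument.
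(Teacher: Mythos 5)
Your proof is correct and follows essentially the same route as the paper: direct method, Banach--Alaoglu for the coefficients, coercivity and the trace inequality for a uniform $H^1$ bound on the states, passage to the limit in the Robin term via the strong-times-weak-$*$ pairing on $\Gamma$, and weak lower semicontinuity of the $L^2$-norm. You are in fact slightly more careful than the paper on two points --- invoking compactness of the trace into $L^2(\Gamma)$ and $L^2(\partial\Omega)$ rather than merely the compact embedding $H^1(\Omega)\hookrightarrow L^2(\Omega)$, and flagging that the weak-$*$ limit need not be continuous and hence need not lie in $P_{ad}$ as literally defined --- both of which the paper glosses over.
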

 \begin{proof}
 It is clear that $\inf J(\gamma)$ is finite over the  admissible set $P_{ad}$. Therefore there exists a minimizing sequence
 $\gamma_{n}$ such that 
\[
\lim_{n\to +\infty}J(\gamma_{n})=\inf_{\gamma\in P_{ad}} J(\gamma).
\] 
The  sequence $\gamma_{n}$ is bounded,  by the Banach-Alaoglu theorem, there exists a subsequence  still denoted  $\gamma_{n}$  such that 
\[
\lim_{n\to \infty}\gamma_{n}=  \gamma^* \quad \text{ weak-*  in  }  L^\infty(\Gamma). 
\] 
By definition $u(\gamma_{n})$ satisfies 
\begin{equation}
\label{varun}
\int_{\Omega}\sigma\nabla u(\gamma_{n})\cdot\nabla v\,dx+\int_\Gamma \gamma_n u(\gamma_n)v\,ds=\int_{\partial\Omega}gv\,ds \quad \forall  v\in H^1(\Omega).
\end{equation}
Taking $v=u(\gamma_{n})$ in \eqref{varun}, we obtain
\[
\int_{\Omega}\sigma|\nabla u(\gamma_{n})|^2\,dx+\int_\Gamma \gamma_n u(\gamma_n)^2\,ds=\int_{\partial\Omega}gu(\gamma_{n})\,ds.
\]
 Using the fact  that  $\| u\|^2:= \| \nabla u\|^2_{L^2(\Omega)}+\| u \|^2_{L^2(\Gamma)}$ is a  norm on $H^1(\Omega)$ equivalent to the natural norm (cf, \cite{MR3636710}) and  from the trace theorem, 
 we deduce  the existence of a constant $c>0$ such that
\[
\| u(\gamma_{n})\|_{H^1(\Omega)}\leq c \| g\|_{L^2(\partial\Omega)}.
\] 
This implies that $u(\gamma_{n})$ is uniformly bounded independent of $n$.  
Therefore there exists a subsequence  of $u(\gamma_{n})$ still denoted $u(\gamma_{n})$ such that 
 \[
 \lim_{n\to \infty}u(\gamma_{n})=u^*  \text{ weakly in } H^1(\Omega).
\]
 As the embedding of $H^1(\Omega)$ into  $L^2(\Omega)$ is compact, we also have
 \[
 \lim_{n\to \infty}u(\gamma_{n})=u^*  \text{  strongly in } L^2(\Omega).
 \]
 Now the strong convergence of $u(\gamma_n)$ in $L^2(\Gamma)$ and the weak-* convergence of $\gamma_n$ in $L^\infty(\Gamma)$ yield that
 \[
 \lim_{n\to\infty}\int_\Gamma \gamma_n u(\gamma_n)v\,ds= \int_\Gamma \gamma^* u^*v\,ds
 \]
Letting $n$ goes  to infinity in equation \eqref{varun}, we conclude that $u^*$ satisfies
\[
\int_{\Omega}\sigma\nabla u^*_{n}\cdot\nabla v\,dx+\int_\Gamma \gamma^*u^*v\,ds=\int_{\partial\Omega}gv\,ds,\quad \forall  v\in H^1(\Omega).
\]
Due to  the uniqueness of the weak limit, we get   $u(\gamma^*)=u^*$. This means that
\[
 \lim_{n\to \infty}u(\gamma_{n})=u(\gamma^*) \text{ weakly in } H^1(\Omega)  \text{ and   strongly in } L^2(\Omega).
 \]
Using the lower semi-continuity of the $L^2$-norm  yields
 \[
 J(\gamma^*)\leq \liminf_{n\to \infty}  J(\gamma_{n})=J(\gamma),
 \] 
which concludes the proof.
 \end{proof}
 In what follows we focus on the computation of the  derivative of  the function  $J$.
 \subsection{Derivative by the min-sup differentiability}
%%%%%%%%%%%%%%%%%%%%%%%%%%%%%%
 We introduce the Lagrangian functional
 \[
 G(\gamma,\varphi,\psi)= J(\gamma,\varphi)+\int_\Omega \sigma\nabla\varphi\cdot\nabla\psi \,dx+\int_\Gamma \gamma\varphi\psi\,ds-\int_{\partial\Omega}g\psi\,ds, \quad \forall \varphi, \psi \in H^1(\Omega).
 \]
Then, it is easy to check that 
\[
J(\gamma,u(\Gamma))=\adjustlimits\min_{\varphi \in H^1(\Omega)}\sup_{\psi \in H^1(\Omega)}G(\gamma,\varphi,\psi),
\] 
since
\[
\sup_{\psi \in H^1(\Omega)}G(\Gamma,\varphi,\psi)= 
\left\{ \begin{array}{ccc} J(\gamma,u(\gamma)) &\text{ if } \varphi=u(\gamma),\\  +\infty  &\text{otherwise}. \end{array}\right.
\]
It is easily shown that the functional $G$  is convex continuous with respect to $\varphi$ and concave continuous with respect to $\psi$.
Therefore, according to Ekeland and Temam \cite{ET}, the functional $G$ has a saddle point $(u,v)$ if and only if $(u,v)$ solves the following system:
\[
\begin{aligned}
&\partial_{\psi}G(\gamma,u,v;\hat{\psi})=0, \\
&\partial_{\varphi}G(\gamma,u,v;\hat{\varphi})=0, 
\end{aligned}
\]
for all $\hat{\psi} \in H^1(\Omega)$ and $\hat{\varphi} \in H^1(\Omega)$. This yields that   $G$  has a saddle  point $(u,v)$,    where the  state  $u$  is the unique  solution of \eqref{eqv} and  the adjoint state 
 $v = v(\gamma)$ is the solution of the following adjoint  problem: 
 \begin{equation}\label{var-adj-neum}
 \int_{\Omega}\sigma\nabla v\cdot \nabla \hat{v}\,dx+\int_\Gamma \gamma v\hat{v}\,ds+\int_{\partial\Omega}(u-u_a)\hat{v}\,ds=0\quad \text{ for all } \hat{v}\in H^1(\Omega).
  \end{equation}
 Summarizing the above, we have  obtained
 \begin{theorem}
 The functionals  $J(\gamma,u(\gamma))$  is  given as
 \begin{equation}
J(\gamma,u(\gamma))=\adjustlimits\min_{\varphi \in H^1(\Omega)}\sup_{\psi \in H^1(\Omega)}G(\gamma,\varphi,\psi),
\end{equation}
The unique saddle point for $G$  is   given by  $(u,v)$.
\end{theorem}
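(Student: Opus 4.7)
The plan is to establish both parts of the theorem by a standard Lagrangian/saddle-point argument; most of the ingredients are already assembled in the paragraph immediately preceding the statement.

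For the min-sup identity, I would first observe that $G(\gamma,\varphi,\psi)$ is affine in $\psi$: writing
\[
G(\gamma,\varphi,\psi) = J(\gamma,\varphi) + \ell_\varphi(\psi), \qquad \ell_\varphi(\psi) := \int_\Omega \sigma\nabla\varphi\cdot\nabla\psi\,dx+\int_\Gamma \gamma\varphi\psi\,ds-\int_{\partial\Omega}g\psi\,ds,
\]
the map $\psi\mapsto \ell_\varphi(\psi)$ is a continuous linear functional on $H^1(\Omega)$. Hence $\sup_\psi G(\gamma,\varphi,\psi)=+\infty$ unless $\ell_\varphi\equiv 0$, and by the variational equation \eqref{eqv} the vanishing of $\ell_\varphi$ is equivalent to $\varphi = u(\gamma)$. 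For that admissible choice the supremum equals $J(\gamma,u(\gamma))$, so taking the infimum over $\varphi\in H^1(\Omega)$ yields the claimed min-sup formula.

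For the saddle-point assertion, I would apply the Ekeland-Temam criterion (exactly as referenced in the text). This is applicable because $G$ is continuous on $H^1(\Omega)\times H^1(\Omega)$, (strictly) convex in $\varphi$ (the $L^2(\partial\Omega)$-tracking term is a convex quadratic in $\varphi$, and the remaining $\varphi$-dependence is linear), and affine, hence concave, in $\psi$. Any saddle point $(u,v)$ is then characterized by the two stationarity conditions $\partial_\psi G(\gamma,u,v;\hat\psi)=0$ and $\partial_\varphi G(\gamma,u,v;\hat\varphi)=0$ for all test functions $\hat\psi,\hat\varphi\in H^1(\Omega)$. The first condition reproduces the variational equation \eqref{eqv}, so $u=u(\gamma)$ is uniquely determined by Lax-Milgram. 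Substituting this $u$ into the second condition yields precisely the adjoint variational problem \eqref{var-adj-neum}, which by the same coercive bilinear form has a unique solution $v=v(\gamma)$. Consequently the saddle point exists, is unique, and equals $(u,v)$.

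I do not expect a serious obstacle: the argument reduces to separating out the affine $\psi$-dependence, reading off the two stationarity conditions, and invoking unique solvability of the direct and adjoint well-posed elliptic problems. The only small verification is that $\varphi\mapsto G(\gamma,\varphi,\psi)$ is convex for every fixed $(\gamma,\psi)$, which is immediate since the quadratic part of $G$ in $\varphi$ comes entirely from $\tfrac{1}{2}\int_{\partial\Omega}(\varphi-u_a)^2\,ds$, while all other $\varphi$-contributions are linear.
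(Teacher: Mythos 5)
Your proposal is correct and follows essentially the same route as the paper: identify $\sup_\psi G(\gamma,\varphi,\psi)$ as $+\infty$ unless $\varphi=u(\gamma)$ (where it equals $J(\gamma,u(\gamma))$), then invoke the Ekeland--Temam convex--concave saddle-point characterization and read off the state equation \eqref{eqv} and the adjoint equation \eqref{var-adj-neum} from the two stationarity conditions, with uniqueness coming from Lax--Milgram for each. The only quibble is the parenthetical ``(strictly) convex'': $G$ is not strictly convex in $\varphi$ on $H^1(\Omega)$ (only the boundary trace enters quadratically), but this is harmless since you correctly derive uniqueness of the saddle point from the unique solvability of the stationarity system rather than from strict convexity.
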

\begin{theorem}\label{diffrobin}
 The functional $J$ is Gateaux  differentiable, and  its Gateaux derivative  at  $\gamma\in C(\Gamma)$ in the  direction 
 $\hat{\gamma}$ is given by 
\begin{equation}\label{DJ}
D_{\gamma}J(\gamma,u(\gamma);\hat{\gamma})=\int_{\Gamma}\hat{\gamma}\left(uv+\lambda\gamma\right) \,ds.
\end{equation}
\end{theorem}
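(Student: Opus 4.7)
The plan is to apply the standard min-sup differentiability theorem (as in Correa-Seeger, or Delfour-Zol\'esio \emph{Shapes and Geometries}) to the Lagrangian $G(\gamma,\varphi,\psi)$ already introduced above Theorem \ref{diffrobin}. The point of the machinery is that once one has verified the hypotheses of the theorem, the derivative of the value function $\gamma\mapsto J(\gamma,u(\gamma))=\min_\varphi\sup_\psi G(\gamma,\varphi,\psi)$ in a direction $\hat\gamma$ equals the partial derivative $\partial_\gamma G(\gamma,u,v;\hat\gamma)$ evaluated at the unique saddle point $(u,v)$, with no additional terms involving the derivatives of $u$ or $v$ with respect to $\gamma$. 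This bypasses the need to justify differentiability of the control-to-state map or to work with the state sensitivity equation.

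First I would check the hypotheses. The previous theorem has already established that for every $\gamma$ in a neighbourhood of the base point in $C(\Gamma)$ the saddle set of $G(\gamma,\cdot,\cdot)$ is the singleton $\{(u(\gamma),v(\gamma))\}$, with $u(\gamma)$ solving \eqref{eqv} and $v(\gamma)$ solving \eqref{var-adj-neum}. The joint convexity-concavity of $G$ in $(\varphi,\psi)$ and the continuity of $G$ were already noted. The map $\gamma\mapsto G(\gamma,\varphi,\psi)$ is affine (so in particular $C^1$) for each fixed $(\varphi,\psi)$, with partial derivative
\[
\partial_\gamma G(\gamma,\varphi,\psi;\hat\gamma)=\lambda\int_\Gamma\gamma\hat\gamma\,ds+\int_\Gamma\hat\gamma\,\varphi\psi\,ds,
\]
and a short computation using the continuous dependence of $u(\gamma),v(\gamma)\in H^1(\Omega)$ on $\gamma\in C(\Gamma)$, combined with the trace theorem, shows that this partial derivative is continuous in $(\gamma,\varphi,\psi)$ along $(u(\gamma),v(\gamma))$ uniformly in directions $\hat\gamma$ of bounded $C(\Gamma)$-norm. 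This is precisely the regularity needed to invoke the min-sup differentiability theorem.

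Second I would evaluate the formula. Applying the theorem at $(\gamma,u(\gamma),v(\gamma))$ with the direction $\hat\gamma\in C(\Gamma)$ yields
\[
D_\gamma J(\gamma,u(\gamma);\hat\gamma)=\partial_\gamma G(\gamma,u,v;\hat\gamma)=\int_\Gamma\hat\gamma\bigl(uv+\lambda\gamma\bigr)\,ds,
\]
which is the claimed identity. The Gateaux differentiability then follows because the right-hand side is linear and continuous in $\hat\gamma\in C(\Gamma)$.

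The main obstacle is the technical verification of the min-sup differentiability hypotheses, in particular the continuity of $(\gamma,\varphi,\psi)\mapsto\partial_\gamma G$ at the saddle point; this relies on the Lipschitz dependence of the state and adjoint state on $\gamma\in C(\Gamma)$, which in turn rests on the coercivity already used in the existence proof for \eqref{minp}. Once that is in hand, the computation of the partial derivative itself is immediate from the explicit form of $G$, since the only $\gamma$-dependent terms are the Tikhonov penalty and the boundary integral $\int_\Gamma\gamma\varphi\psi\,ds$.
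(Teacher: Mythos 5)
Your proposal follows essentially the same route as the paper: both invoke the Correa--Seeger min-sup differentiability theorem for the Lagrangian $G$, verify its hypotheses via the boundedness and (Lipschitz) continuous dependence of the state $u(\gamma)$ and adjoint $v(\gamma)$ on $\gamma$ obtained from coercivity, and read off the derivative as $\partial_\gamma G$ at the unique saddle point. The only quibble is that $\gamma\mapsto G$ is quadratic rather than affine because of the Tikhonov term, but your computed partial derivative is correct, so this does not affect the argument.
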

\begin{proof}
Let $\gamma_{t}=\gamma+t\hat{\gamma}$, where $\hat{\gamma}\in C(\Gamma)$ and $t\in\mathbb{R}$ is sufficiently small parameter.    
Under hypotheses of Theorem \ref{CS}, we have
\[
D_{\gamma}J(\gamma,u(\gamma);\hat{\gamma})=\partial_{t}G(\gamma_{t},u(\gamma_t),v(\gamma_t))\Big|_{t=0},
\]
where
\[
 G(\gamma_t,\varphi,\psi)= J(\gamma,\varphi)+\int_\Omega \sigma\nabla\varphi\cdot\nabla\psi \,dx+\int_\Gamma \gamma_t\varphi\psi\,ds-\int_{\partial\Omega}g\psi\,ds, 
 \]
and 
\[
\partial_{t}G(\gamma_{t},u,v)\Big|_{t=0}=\int_{\Gamma}\hat{\gamma}\left(uv+\lambda\gamma\right)\,ds.
\]
From the above  equation  yields \eqref{DJ}. To end the proof, we should verify the four assumptions $(H_{1})-(H_{4})$   of Theorem \ref{CS} given in  the appendix . 
As in Theorem \ref{CS}, introduce the sets
\[
X(t) :=\left\{ x^t\in H^1(\Omega): \sup_{y\in H^1(\Omega)}G(t,x^t,y) = \adjustlimits\inf_{x\in H^1(\Omega)}\sup_{y\in H^1(\Omega)}G(t,x,y)\right\},
 \]
 \[
 Y(t)  :=\left\{ y^t\in H^1(\Omega): \inf_{x\in H^1(\Omega)}G(t,x,y^t)=\adjustlimits\sup_{y\in H^1(\Omega)}\inf_{x\in H^1(\Omega)}G(t,x,y)\right\},
 \]
 we obtain
 \[
\forall \;t\in [0,\varepsilon]\quad S(t)=X(t)\times Y(t)=\{u(\gamma_t),v(\gamma_t)\}\neq\varnothing,
\]
and assumption $(H_{1})$ is satisfied.\\

\noindent{\it Assumption $(H_{2})$: } 
The partial derivative $ \partial_{t} G(t,\varphi,\psi)$  exists everywhere in $[0,\varepsilon)$ and   the condition $(H_{2})$ is satisfied. \\
\noindent{\it Assumptions $(H_{3})$ and $(H_{4})$:} We show first the boundedness of  $(u(\gamma_t),v(\gamma_t))$.
 Let $w=u(\gamma_t)$ in the variational equation 
 \begin{equation}
 \label{varpt}
\int_\Omega\sigma\nabla u(\gamma_t)\cdot\nabla w\,dx+\int_\Gamma \gamma_t u(\gamma_t)w\,ds=\int_{\partial\Omega}gv\,ds \text{ for all } w\in  H^1(\Omega).
\end{equation}
 We get 
 \[
 \min(\sigma_1,\sigma_2)\|\nabla u(\gamma_t) \|^2_{L^2(\Omega)}+ c_0\| u(\gamma_t) \|^2_{L^2(\Gamma)}\leq \| g\|_{L^2(\partial \Omega)}\| u(\gamma_t) \|_{L^2(\partial \Omega)}.
 \]
 Using the fact  that  $\| u\|^2:= \| \nabla u\|^2_{L^2(\Omega)}+\| u \|^2_{L^2(\Gamma)}$ is a  norm on $H^1(\Omega)$ equivalent to the natural norm  and  from the trace theorem, there exists  $c>0$,  depending only on  $\Omega$  \ such that 
 \[ \| u(\gamma_t) \|_{H^1(\Omega)}\leq c \| g\|_{L^2(\partial \Omega)},\]
which yields
\[ \sup_{t\in [0,\varepsilon)}\|u(\gamma_t) \|_{H^1(\Omega)}\leq c\| g\|_{L^2(\partial \Omega)}.\]
We apply the same technique to the variational equation 
\begin{equation}
\label{varadjt}
 \int_{\Omega}\sigma\nabla v(\gamma_t)\cdot \nabla \hat{v}\,dx+\int_\Gamma \gamma_t v(\gamma_t)\hat{v}\,ds+\int_{\partial\Omega}(u(\gamma_t)-u_a)\hat{v}\,ds=0,
 \end{equation}
  for all  $\hat{v}\in H^1(\Omega)$, and we  are able to show that the function $v(\gamma_t)$ is bounded. 
The next step is to show the continuity with respect to $t$ of the vector $(u(\gamma_t),v(\gamma_t))$. Subtracting \eqref{varpt} at $t>0$ and   $t=0$ and  choosing  
$w=u(\gamma)-u(\gamma_t)$ yields   
 \begin{align*}
 &\int_{\Omega}\sigma|\nabla(u(\gamma)-u(\gamma_t))|^2\,dx +\int_{\Gamma}\gamma(u(\gamma)-u(\gamma_t))^2\, ds  
 = \int_{\Gamma}(\gamma_t-\gamma)u(\gamma_t)(u(\gamma)-u(\gamma_t))\\
& \leq \| \gamma_t-\gamma\|_{L^\infty(\Gamma)}\| u(\gamma_t) \|_{L^2(\Gamma)}\| u(\gamma)-u(\gamma_t)\|_{L^2(\Gamma)} .
\end{align*}
Furthermore due to the boundedness  of $u(\gamma_t)$, we obtain
 \[
  \| u(\gamma_t)-u(\gamma)\|_{H^1(\Omega)}\leq c\| \gamma_t-\gamma \|_{L^\infty(\Gamma)}.
 \]
 Due to the strong continuity  of $\gamma_t$ as a function of $t$, one deduces that  $u(\gamma_t)\rightarrow u(\gamma)$ in $H^1(\Omega)$ as $t \rightarrow 0$. 
 Concerning  the  continuity of $v(\gamma_t)$,  one may  show  from \eqref{varadjt} that $v(\gamma_t)\rightarrow v(\gamma)$ in $H^1(\Omega)$.
Finally in  view of the strong continuity of 
\[
(t,\varphi)\rightarrow \partial_{t}G(\gamma_t,\varphi,\psi),  \quad   (t,\psi)\rightarrow \partial_{t}G(\gamma_t,\varphi,\psi),   
\]
 assumptions $(H_{3})$ and $(H_{4})$ are verified. 
\end{proof}
\section{Implementation details and numerical examples}
In the following numerical examples,  the domain $\Omega$  under consideration is the unit disk centered at  the origin and  the boundary $\Gamma$ is  given by: 
\[
\Gamma=\left\{ (x_1,x_2)\in \R^2: \quad  x^2_1+x^2_2=0.5^2  \right\}.
\]
The domain $\Omega$ is discretized using a Delaunay triangular mesh.  A  standard finite element method with piecewise finite elements is applied to compute numerically the state and the adjoint state for our problem. 
 The exact  data $u_a$  are computed  synthetically by solving the direct problem \eqref{transm}. 
 In   the real-world, the data $u_a$  are experimentally acquired and thus always contaminated  by  errors.
In our numerical examples the simulated noise data  are generated using the following formula:
\[
\tilde{u}_a(x_1,x_2)= u_a(x_1,x_2)\left(1+\varepsilon\delta\right)\quad \text{ on } \partial\Omega,
\]
where $\delta$ is a normal distributed random variable and $\varepsilon$ indicates the level of noise.  For our examples, the random variable $\delta$ is realized using 
the matlab   function randn$()$.   The conductivity $\sigma$  is  taken to be 
\[
\sigma=2\chi_{\Omega_1}+\chi_{\Omega_2}.  
\]
We use the BFGS algorithm to solve the minimization problem \eqref{minp}.  This quasi-Newton method is well adapted  to such problem.   
\subsection{Numerical examples}
For the following numerical examples,  we include both reconstruction from a noiseless  and noisy data. 
The regularization parameter $\lambda$   is set to zero because it does not  seem to play an indispensable role in our numerical  experiments.

Figure \ref{geom1},   shows the monotonicity relation between the Robin coefficient  $\gamma$ and the Neumann-to-Dirichlet 
operator $\Lambda(\gamma)$. 
%%%%%%%%%%%%%%%%%%%%%%%%%%%%%%%
\subsubsection{Example 1}
In this example,  we  use three measurements corresponding to the following fluxes:
\[
g_1(\theta)=\cos(\theta), \quad g_2(\theta)=\sin(\theta)\quad \text{ and }\quad g_3(\theta)=\cos^2(\theta)-\sin^2(\theta) \text{ on }\partial\Omega. 
\] 
In this case the cost function $J$ takes the form:
\[
J(\gamma)=\sum_{k=1}^3\int_{\partial\Omega}(u^{g_k}_\gamma)-u^{g_k}_a)^2\,ds+\frac{\lambda}{2}\int_\Gamma \gamma^2\,ds,
\]
where  $u^{g_k}_\gamma$ is the solution to problem \eqref{transm}  with respect to the flux $g_k$ and  $u^{g_k}_a$ is the 
corresponding measurement data.

The exact Robin coefficient  to be reconstructed is given by
\[
 \gamma(\theta)=\exp\left(-\frac{1}{2}\cos(\theta)\right), \quad \theta \in [0,2\pi].
 \]
Since the  cost functional is non convex, then it might  have some local minima. Therefore the  accuracy  of the reconstruction of the Robin coefficients depends   on   the initial guess  $\gamma_0$ as depicted in figure \ref{geom2}.  The numerical solution represents reasonable approximation  and  it is stable with respect to a small
amount  of noise   as shown in  figures \ref{geom5}, \ref{geom8}.
Figures \ref{geom3}, \ref{geom4}, \ref{geom6}, \ref{geom7}, \ref{geom9}, \ref{geom10}, shown  the decrease of cost function $J$  and the $L^\infty$-norm of $DJ(\gamma)$ in the course of the optimization process.

%%%%%%%%%%%%%%%%%%%%%%%%%%%%%%%%%
\begin{figure}[ht]
 \begin{minipage}[b]{0.45\linewidth}
 \includegraphics[width=5.75cm, height=5.5cm]{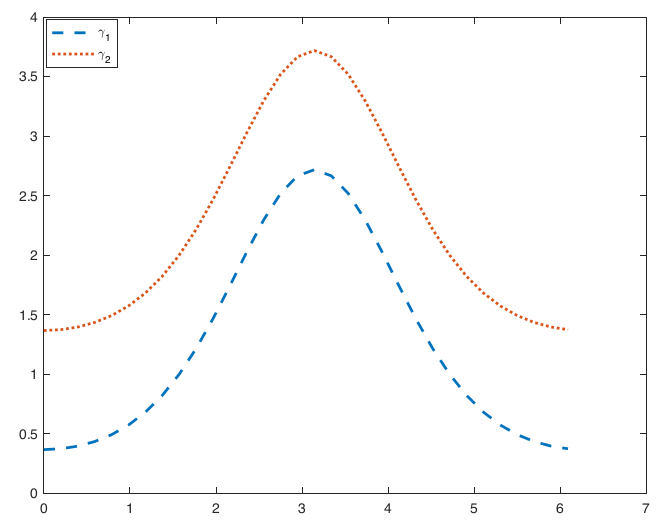}
 \centering{$(a)$}
 \end{minipage}
 \hfill
\begin{minipage}[b]{0.45\linewidth}
\includegraphics[width=5.75cm, height=5.5cm]{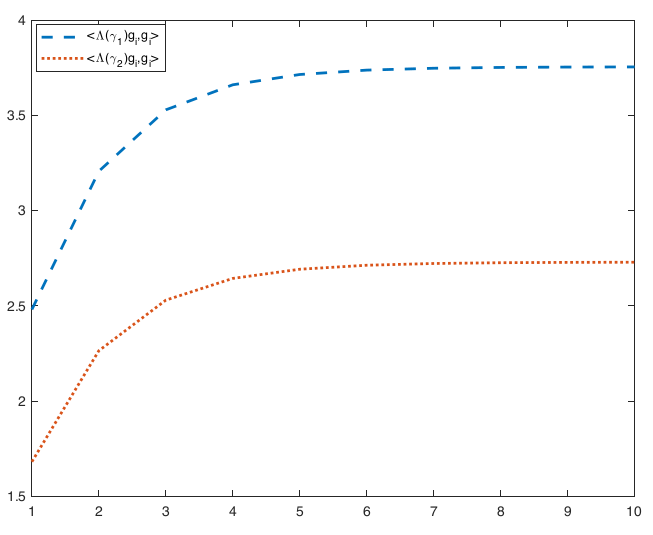}
 \centering{$(b)$}
\end{minipage}
 \caption{Monotonicity of the Neumann-to-Dirichlet operator $g_i\rightarrow \Lambda(\gamma)g_i$. In  figure  (a) $\gamma_1=\exp(-\cos(\theta))\leq \gamma_2=\exp(-\cos(\theta))+1, \theta\in[0,2\pi]$. In figure  (b),  $\langle \Lambda(\gamma_1)g_i,g_i\rangle\geq \langle \Lambda(\gamma_2)g_i,g_i\rangle$ with $g_i=\sin(i\theta), i=1,\ldots,10$.}
 \label{geom1}
 \end{figure}
% %%%%%%%%%%%%%%%%%%%%%%%%%%%%%%%
 \begin{figure}[ht]
 \begin{minipage}[b]{0.45\linewidth}
 \includegraphics[width=5.75cm, height=5.5cm]{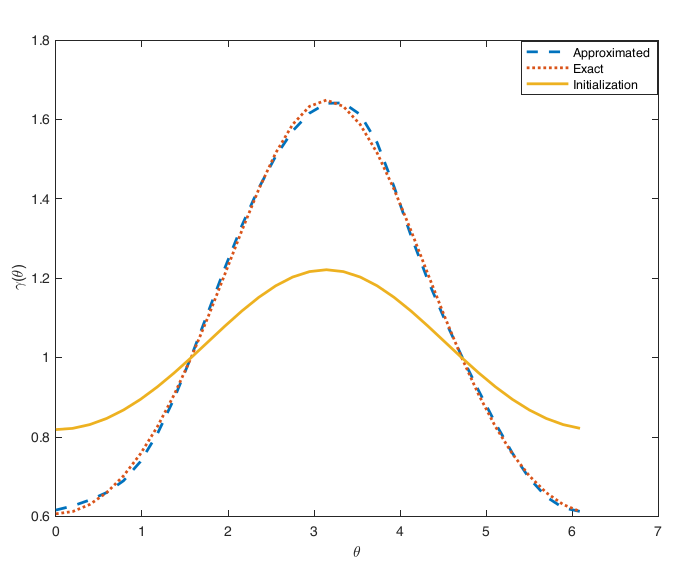}
 \centering{}
 \end{minipage}
 \hfill
\begin{minipage}[b]{0.45\linewidth}
\includegraphics[width=5.75cm, height=5.5cm]{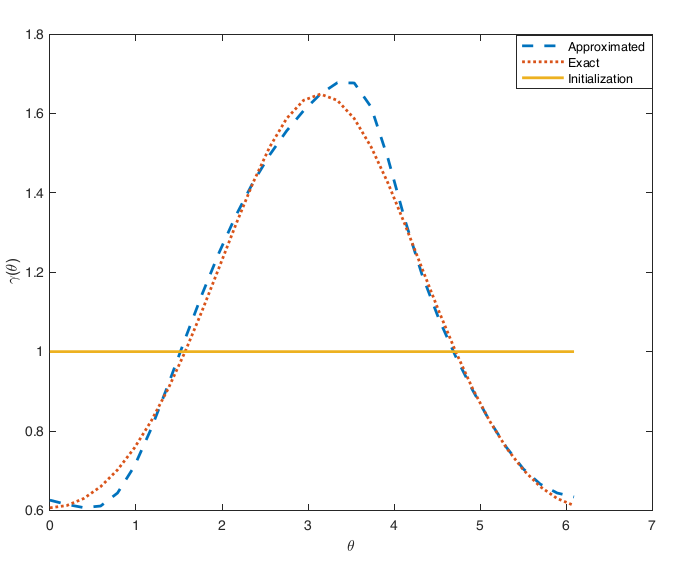}
 \centering{}
\end{minipage}
 \caption{Simulation results for Example1:  Reconstruction of the Robin coefficient for two different initialization $\gamma_{i}(\theta)=\exp(-0.2\cos(\theta))$ and $\gamma_i=1$, with level noise $\varepsilon=0$.}
  \label{geom2}
 \end{figure}
%%%%%%%%%%%%%%%%%%%%%%%%%%%%%%%%%% 
\begin{figure}[ht]
 \begin{minipage}[b]{0.45\linewidth}
 \includegraphics[width=5.75cm, height=5.5cm]{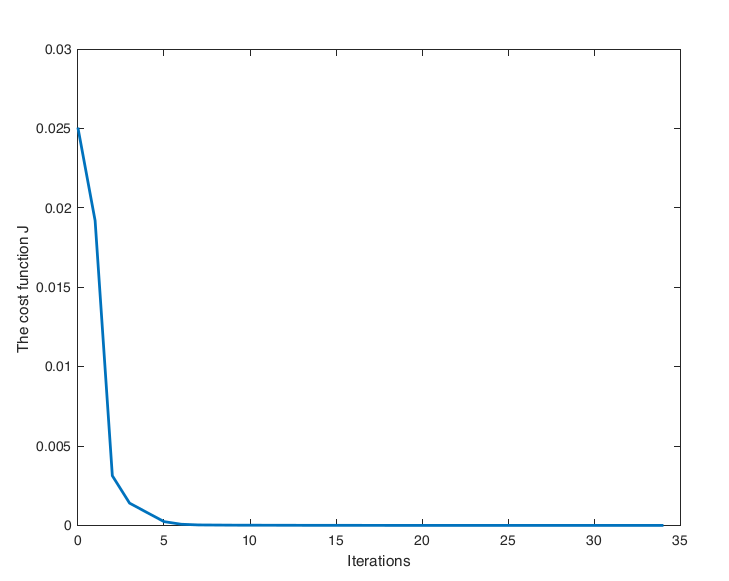}
 \centering{}
 \end{minipage}
 \hfill
\begin{minipage}[b]{0.45\linewidth}
\includegraphics[width=5.75cm, height=5.5cm]{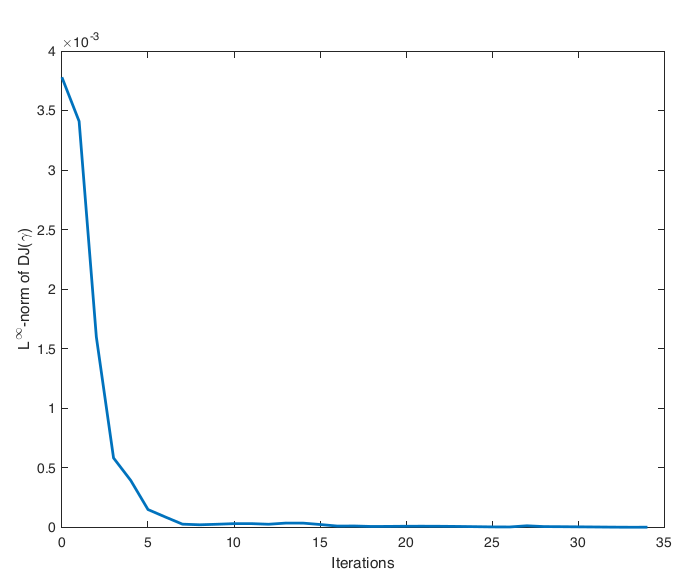}
 \centering{ }
\end{minipage}
 \caption{Simulation results for Example1: History of the cost function $J(\gamma)$ and the $L^\infty$-norm of $DJ(\gamma)$  in the case of the initialization  $\gamma_i=\exp(-0.2\cos(\theta))$ and  
 the level noise $\varepsilon=0$.}
  \label{geom3}
 \end{figure}
%%%%%%%%%%%%%%%%%%%%%%%%%%%%%%%%%%%%
 \begin{figure}[ht]
 \begin{minipage}[b]{0.45\linewidth}
 \includegraphics[width=5.75cm, height=5.5cm]{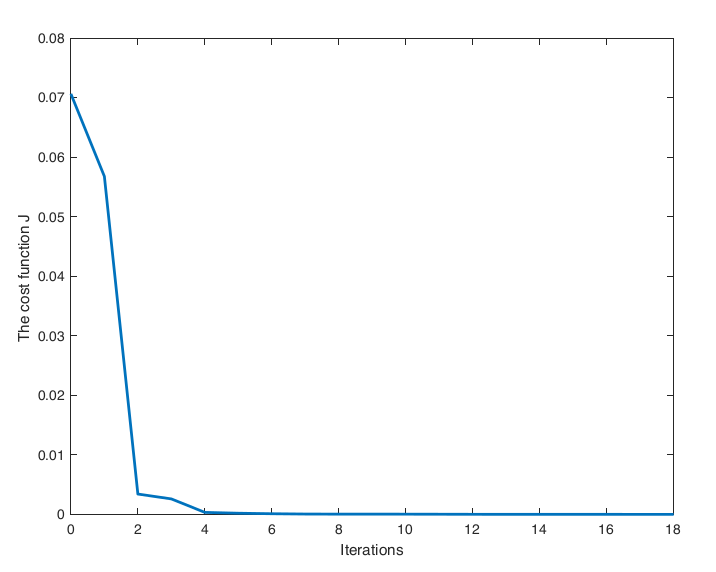}
 \centering{}
 \end{minipage}
 \hfill
\begin{minipage}[b]{0.45\linewidth}
\includegraphics[width=5.75cm, height=5.5cm]{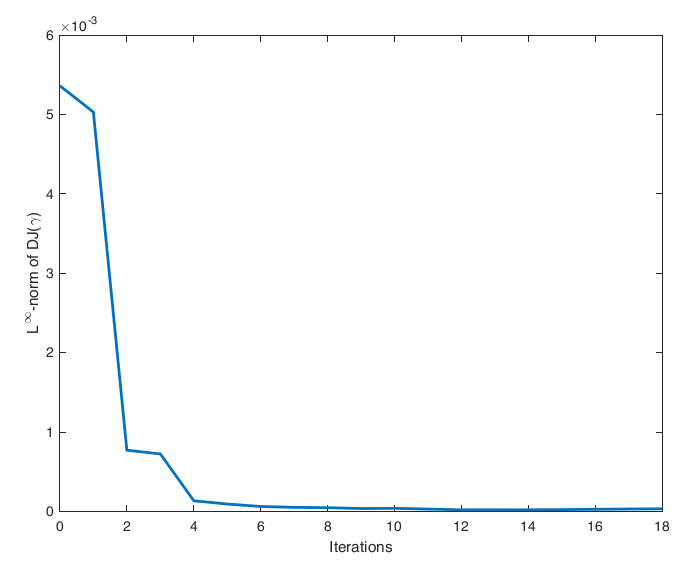}
 \centering{}
\end{minipage}
 \caption{Simulation results for Example1: History of the cost function $J(\gamma)$ and the $L^\infty$-norm of $DJ(\gamma)$  in the case of the initialization  $\gamma_i=1$ and  
 the level noise $\varepsilon=0$.}
  \label{geom4}
 \end{figure}
%%%%%%%%%%%%%%%%%%%%%%%%%%%%%%%%%%%%%
 \begin{figure}[ht]
 \begin{minipage}[b]{0.45\linewidth}
 \includegraphics[width=5.75cm, height=5.5cm]{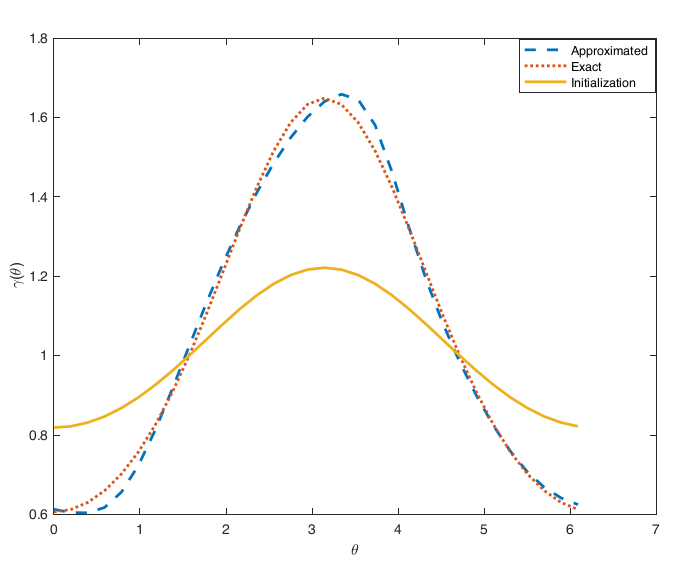}
 \centering{(c)}
 \end{minipage}
 \hfill
\begin{minipage}[b]{0.45\linewidth}
\includegraphics[width=5.75cm, height=5.5cm]{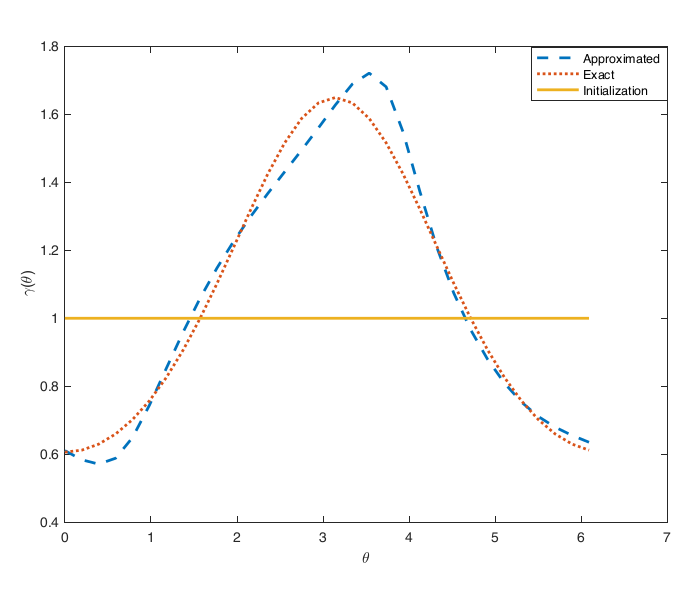}
 \centering{ (d)}
\end{minipage}
 \caption{Simulation results for Example1: Reconstruction of the Robin coefficient 
  for two different initialization $\gamma_{i}(\theta)=\exp(-0.2\cos(\theta))$ and $\gamma_i=1$, 
  with level noise $\varepsilon=0.01$ for (c),  and  $\varepsilon=0.03$ for (d).}
  \label{geom5}
 \end{figure}
% %%%%%%%%%%%%%%%%%%%%%%%%%%%%%%%%%%%%
  \begin{figure}[ht]
 \begin{minipage}[b]{0.45\linewidth}
 \includegraphics[width=5.75cm, height=5.5cm]{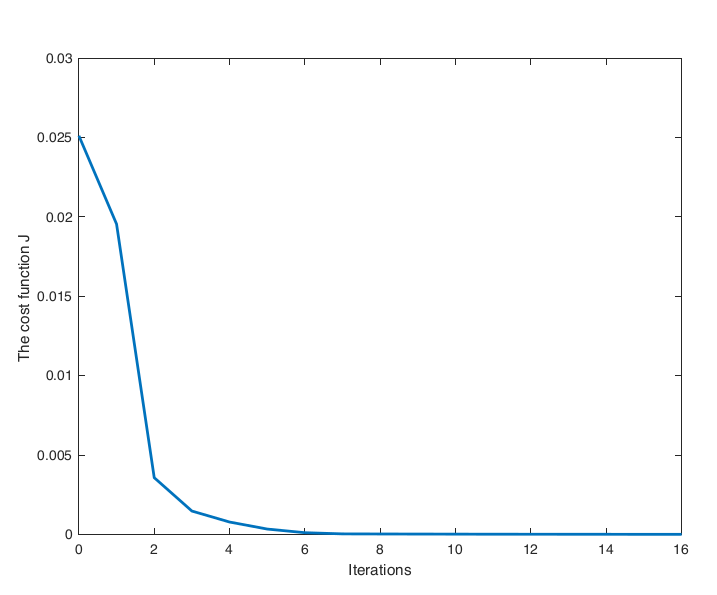}
 \centering{}
 \end{minipage}
 \hfill
\begin{minipage}[b]{0.45\linewidth}
\includegraphics[width=5.75cm, height=5.5cm]{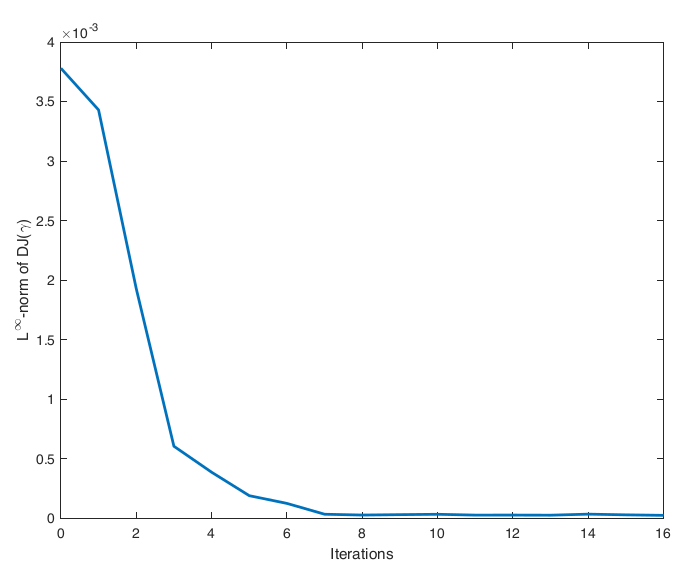}
 \centering{}
\end{minipage}
 \caption{Simulation results for Example 1: History of the objective function $J(\gamma)$ and the L$^\infty$-norm of $DJ(\gamma)$
 in the case of the initialization $\gamma_i=\exp(-0.2\cos(\theta))$ and level noise $\varepsilon=0.01$.}
  \label{geom6}
 \end{figure}
%%%%%%%%%%%%%%%%%%%%%%%%%%%%%%%%%%
  \begin{figure}[ht]
 \begin{minipage}[b]{0.45\linewidth}
 \includegraphics[width=5.75cm, height=5.5cm]{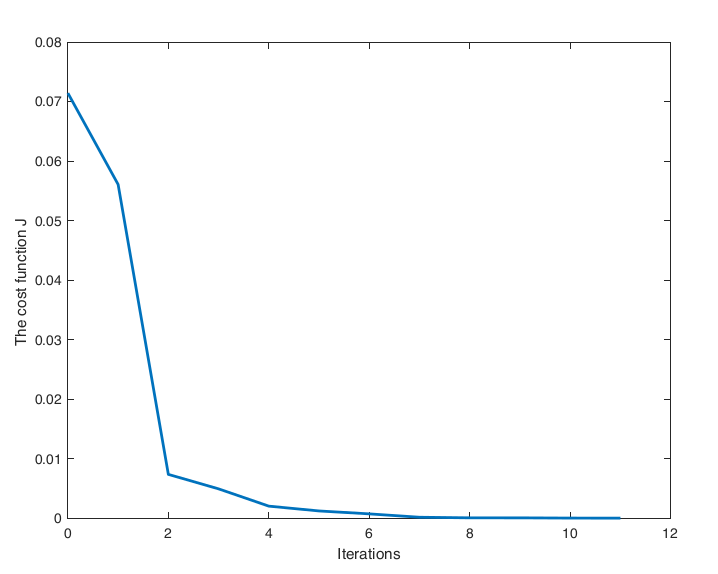}
 \centering{}
 \end{minipage}
 \hfill
\begin{minipage}[b]{0.45\linewidth}
\includegraphics[width=5.75cm, height=5.5cm]{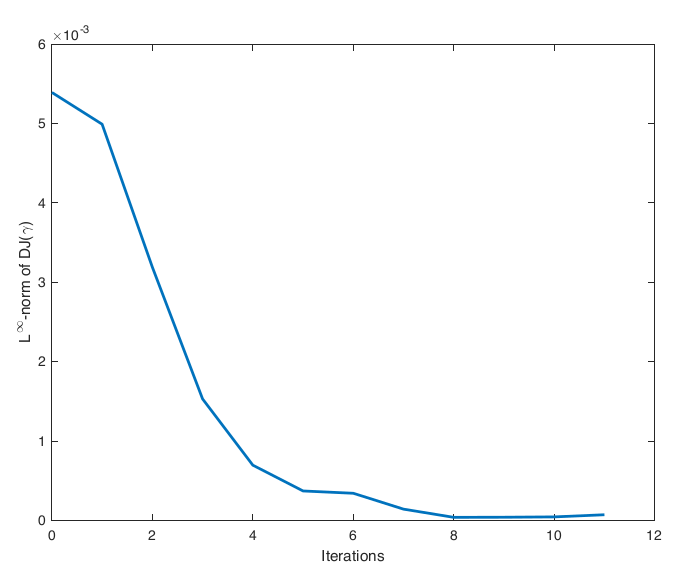}
 \centering{}
\end{minipage}
 \caption{Simulation results for Example 1: History of the objective function $J(\gamma)$ and the L$^\infty$-norm of $DJ(\gamma)$
 in the case of the initialization $\gamma_i=1$ and level noise $\varepsilon=0.03$.}
  \label{geom7}
 \end{figure} 
%%%%%%%%%%%%%%%%%%%%%%%%%%%%%%%%%
 \begin{figure}[ht]
 \begin{minipage}[b]{0.45\linewidth}
 \includegraphics[width=5.75cm, height=5.5cm]{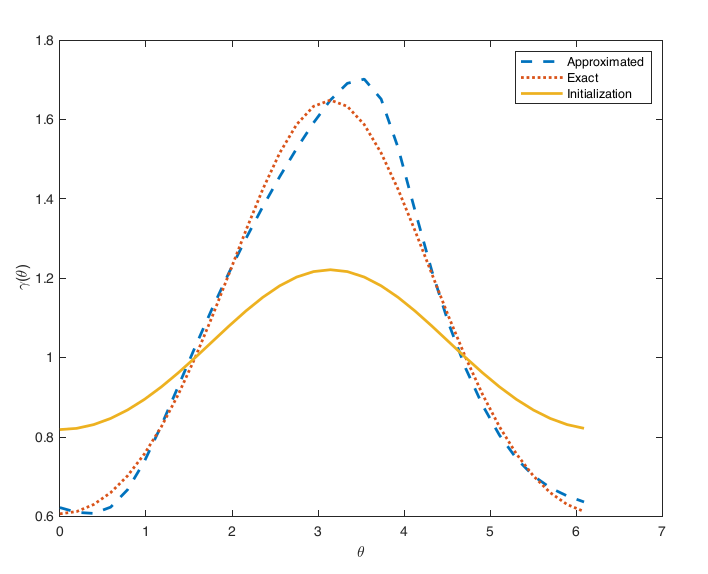}
 \centering{$(e)$}
 \end{minipage}
 \hfill
\begin{minipage}[b]{0.45\linewidth}
\includegraphics[width=5.75cm, height=5.5cm]{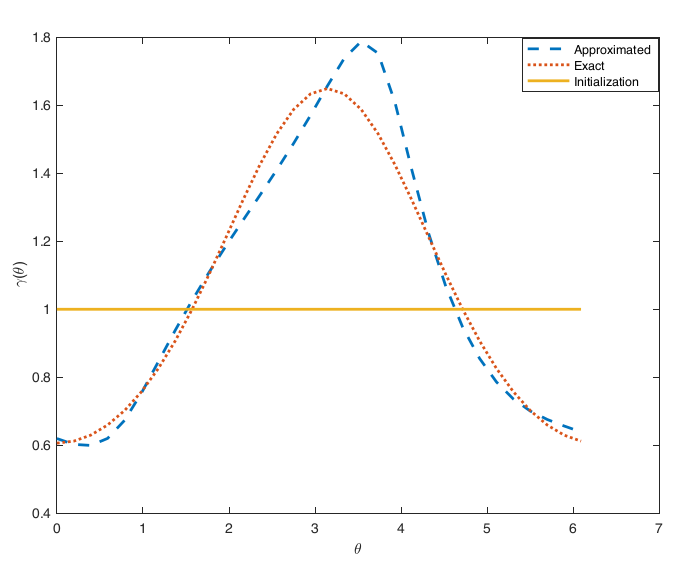}
 \centering{ (f)}
\end{minipage}
 \caption{Simulation results for Example 1: Reconstruction of the Robin coefficient for two different initialization $\gamma_{i}(\theta)=\exp(-0.2\cos(\theta))$ and $\gamma_i=1$,   with level noise $\varepsilon=0.05$ for (e), and  $\varepsilon=0.1$ for $(f)$.}
  \label{geom8}
\end{figure}
%%%%%%%%%%%%%%%%%%%%%%%%%%%%%
   \begin{figure}[ht]
 \begin{minipage}[b]{0.45\linewidth}
 \includegraphics[width=5.75cm, height=5.5cm]{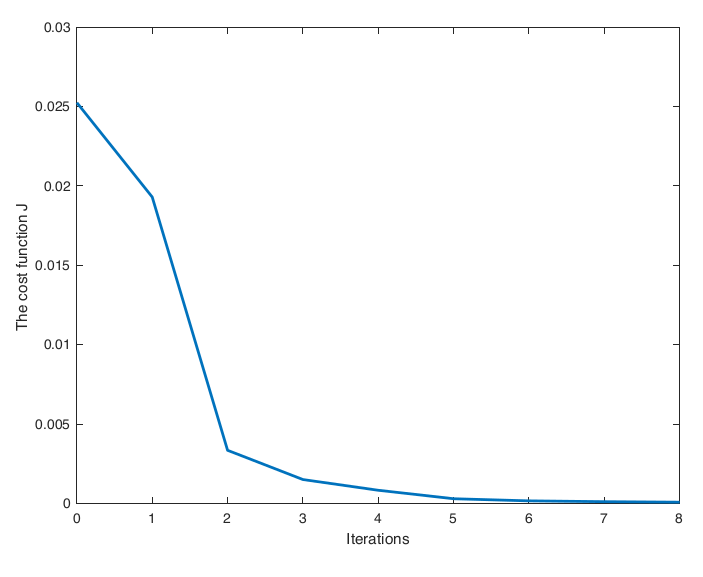}
 \centering{}
 \end{minipage}
 \hfill
\begin{minipage}[b]{0.45\linewidth}
\includegraphics[width=5.75cm, height=5.5cm]{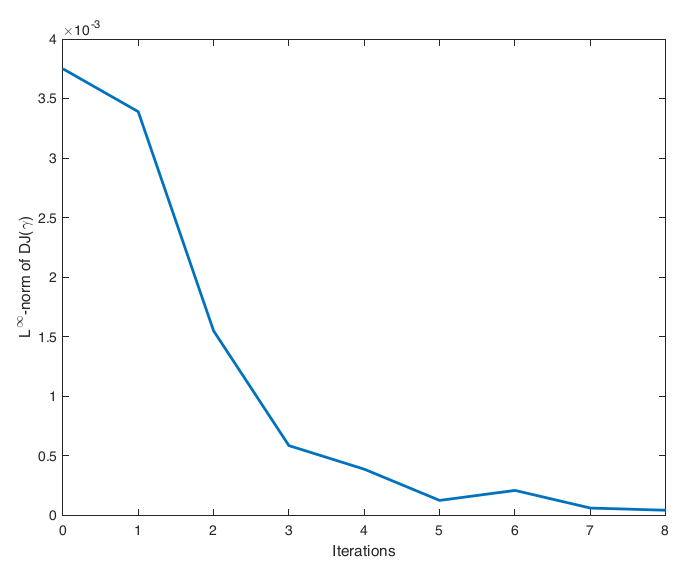}
 \centering{ }
\end{minipage}
 \caption{Simulation results for Example 1: History of the objective function $J(\gamma)$ and the L$^\infty$-norm of $DJ(\gamma)$
 in the case of the initialization $\gamma_i=\exp(-0.2\cos(\theta))$ and level noise $\varepsilon=0.05$.}
  \label{geom9} 
  \end{figure}
%%%%%%%%%%%%%%%%%%%%%%%%%%%%%%
  \begin{figure}[ht]
 \begin{minipage}[b]{0.45\linewidth}
 \includegraphics[width=5.75cm, height=5.5cm]{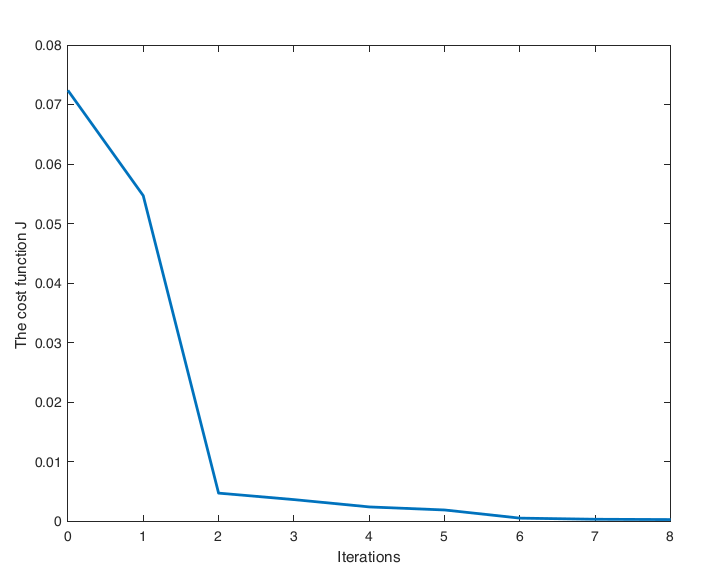}
 \centering{}
 \end{minipage}
 \hfill
\begin{minipage}[b]{0.45\linewidth}
\includegraphics[width=5.75cm, height=5.5cm]{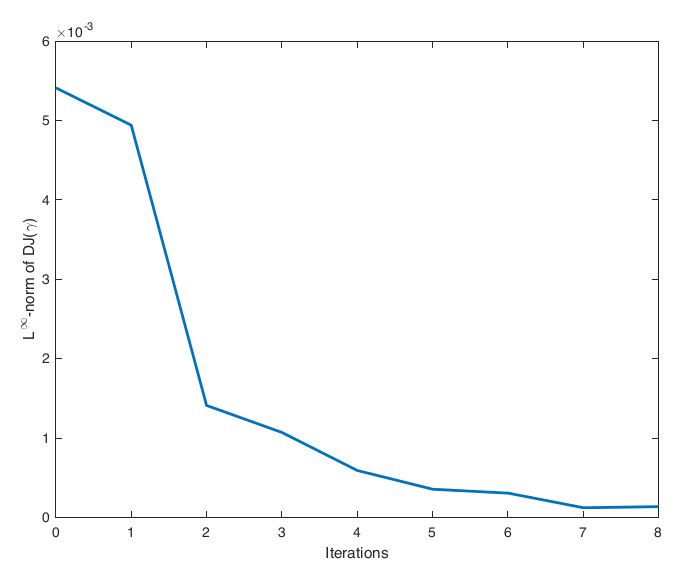}
 \centering{ }
\end{minipage}
 \caption{Simulation results for Example 1: History of the objective function $J(\gamma)$ and the L$^\infty$-norm of $DJ(\gamma)$
 in the case of the initialization $\gamma_i=1$ and level noise $\varepsilon=0.1$.}
  \label{geom10} 
  \end{figure} 
  %%%%%%%%%%%%%%%%%%%%%%%%%%%%%%%%%%%%%%%%%%
  %%%%%%%%%%%%%%%%%%%%%%%%%%%%%%%%%%%%%%%%%%
  \subsubsection{Example 2}
In the second example,  we consider  the   following three  fluxes 
\[
g_1(\theta)=5+\cos(\theta), \quad g_2(\theta)=1+\sin(\theta)\quad \text{ and }  \quad g_3(\theta)=3+\cos^2(\theta)-\sin^2(\theta) \text{ on }\partial\Omega. 
\] 
The exact Robin coefficient  to be reconstructed is given by
\[
 \gamma(\theta)=1+\cos(\theta)^2, \quad \theta \in [0,2\pi].
 \]
A  reasonable reconstruction of the Robin coefficient with noiseless and noisy data  can be seen in figure \ref{geom11}.
Figures \ref{geom12}, \ref{geom13}, shown  the decrease of cost function $J$  and the $L^\infty$-norm of $DJ(\gamma)$ in the course of the optimization process.
  %%%%%%%%%%%%%%%%%%%%%%%%%%%%%%%%%%%%%%%%%
  \begin{figure}[ht]
 \begin{minipage}[b]{0.45\linewidth}
 \includegraphics[width=5.75cm, height=5.5cm]{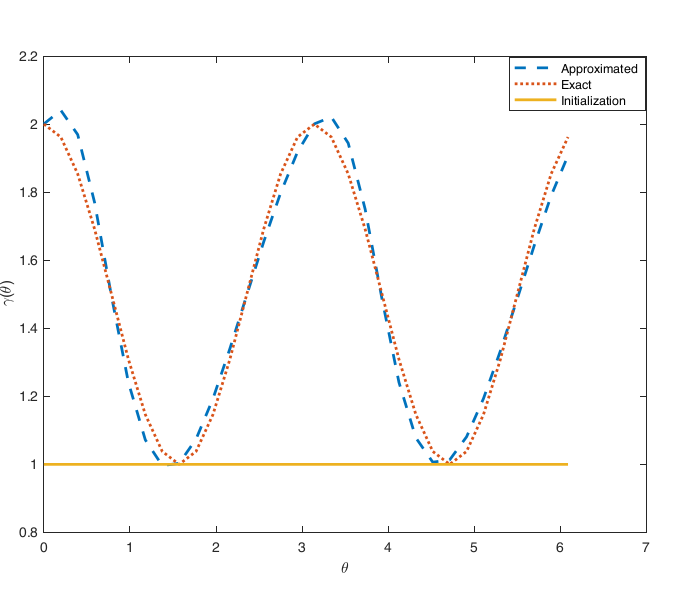}
 \centering{$(g)$}
 \end{minipage}
 \hfill
\begin{minipage}[b]{0.45\linewidth}
\includegraphics[width=5.75cm, height=5.5cm]{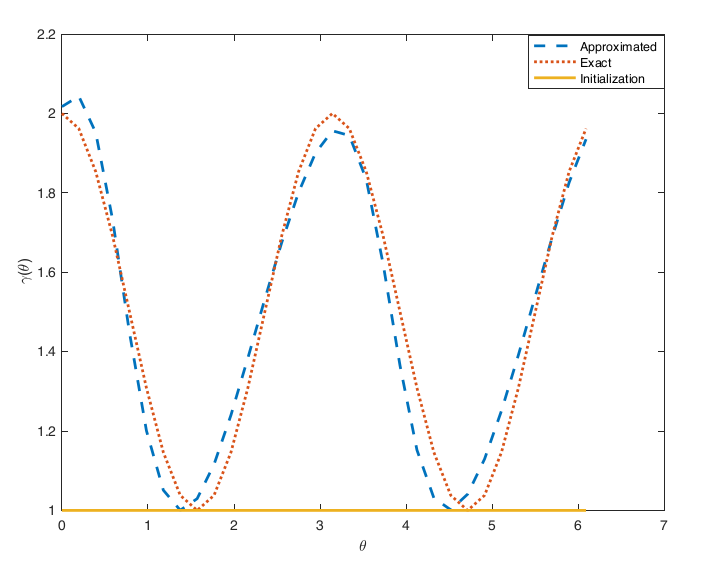}
 \centering{$(h)$}
\end{minipage}
 \caption{Simulation results for Example 2: Reconstruction of the Robin coefficient  with initialization  $\gamma_i=1$, and  level noise $\varepsilon=0.00$ for $(g)$,  and $\varepsilon=0.05$ for $(h)$.}
  \label{geom11} 
  \end{figure} 
  %%%%%%%%%%%%%%%%%%%%%%%%%%%%%%%%%%%
  \begin{figure}[ht]
 \begin{minipage}[b]{0.45\linewidth}
 \includegraphics[width=5.75cm, height=5.5cm]{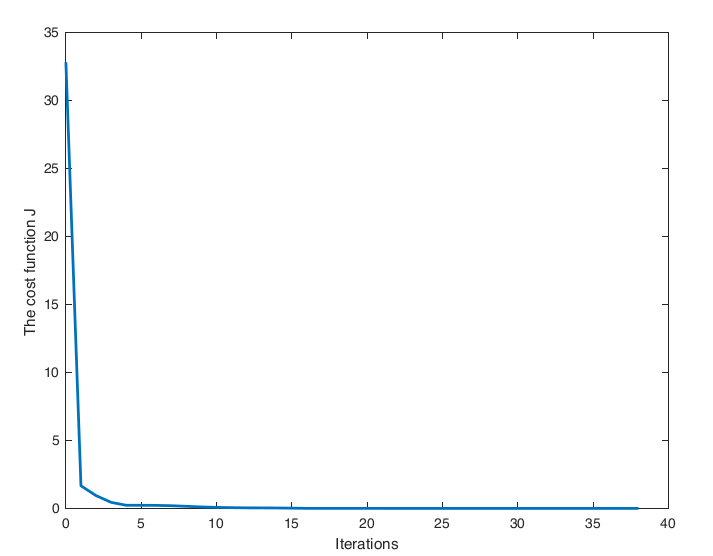}
 \centering{}
 \end{minipage}
 \hfill
\begin{minipage}[b]{0.45\linewidth}
\includegraphics[width=5.75cm, height=5.5cm]{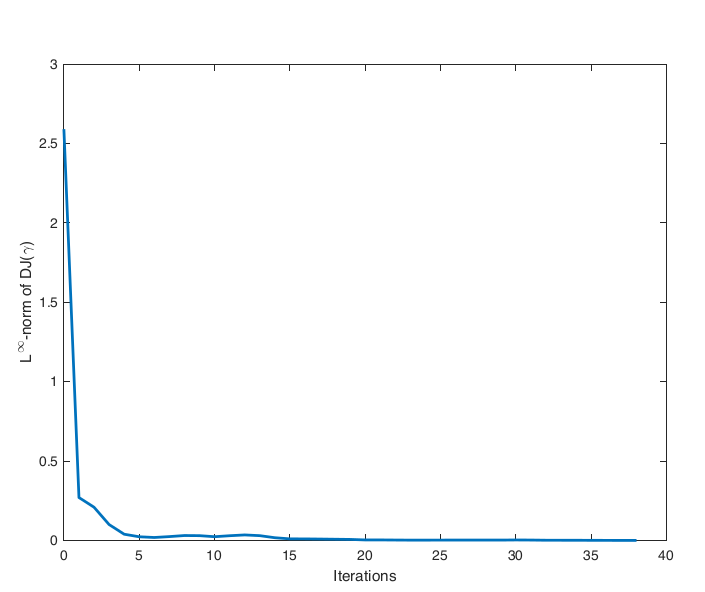}
 \centering{ }
\end{minipage}
 \caption{Simulation results for Example 2:  History of the objective function $J(\gamma)$ and the L$^\infty$-norm of $DJ(\gamma)$
 in the case of the initialization $\gamma_i=1$ and level noise $\varepsilon=0.0$.}
  \label{geom12} 
  \end{figure} 
  %%%%%%%%%%%%%%%%%%%%%%%%%%%%%%%%%%%%%%%
    \begin{figure}[ht]
 \begin{minipage}[b]{0.45\linewidth}
 \includegraphics[width=5.75cm, height=5.5cm]{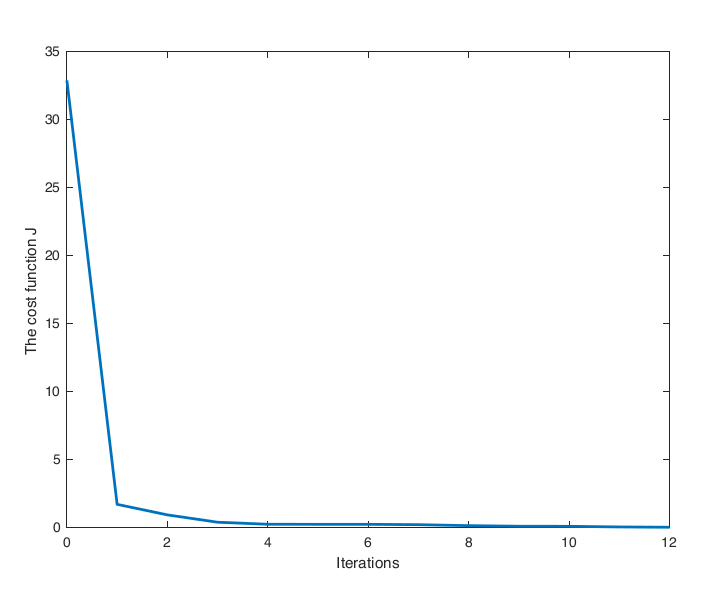}
 \centering{}
 \end{minipage}
 \hfill
\begin{minipage}[b]{0.45\linewidth}
\includegraphics[width=5.75cm, height=5.5cm]{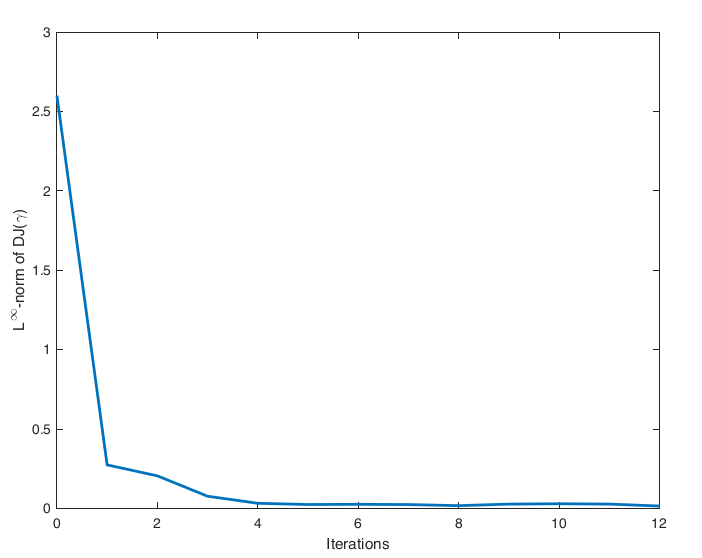}
 \centering{ }
\end{minipage}
 \caption{Simulation results for Example 2: History of the objective function $J(\gamma)$ and the L$^\infty$-norm of $DJ(\gamma)$
 in the case of the initialization $\gamma_i=1$ and level noise $\varepsilon=0.05$.}
  \label{geom13} 
  \end{figure} 
  %%%%%%%%%%%%%%%%%%%%%%%%%%%%%%%%%%%%%
  \section{Conclusion}
  %%%%%%%%%%%%%%%%%%%%%%%%%%%%%%%%%%%%%
We have studied theoretically and numerically  the inverse Robin transmission problem  from boundary measurements.
More precisely, we proved global  uniqueness and Lipschitz stability.  Our proofs rely on monotonicity and localized potentials arguments
and seem simpler than the commonly used technique of Carleman estimates or complex geometrical optics (CGO). Notably this is the first work to prove Lipschitz stability using monotonicity and localized potentials arguments, and the first work showing how to explicitly calculate the Lipschitz stability constant for a given setting.

The inverse problem is recast into a minimization of an output least-square formulation and a  technique based on the differentiability of the min-sup is suggested to drive the first order optimality condition without the derivative of the state solution. 
The BFGS method is employed  to solve the minimization problem, and  numerical results  for two-dimensional problem 
in the case of noiseless and noisy  data  are presented to illustrate the efficiency of the proposed algorithm.  
  \section{Appendix}
 %%%%%%%%%%%%%%%%%%%%%%%%%%%%%%%%
\subsection{An abstract differentiability result}\label{app1}
%%%%%%%%%%%%%%%%%%%%%%%%%%%%%%%%
In this section we give an abstract result for differentiating Lagrangian functionals with respect to a parameter. This result is used to prove Theorem \ref{DJ}. We first introduce some notations.  Consider the functional
\begin{equation}
G: [0,\varepsilon]\times X\times Y\rightarrow \R
\end{equation}
for some $\varepsilon>0$ and the Banach spaces $X$ and $Y$. For each
$t\in [0,\varepsilon]$,  define
\begin{equation}\label{s2}
g(t)=\adjustlimits\inf_{x\in X}\sup_{y\in Y}G(t,x,y),\qquad  h(t)=\adjustlimits \sup_{y\in Y}\inf_{x\in X}G(t,x,y),
\end{equation}
and the associated sets
\begin{align}
 X(t) &=\left\{ x^t\in X: \sup_{y\in Y}G(t,x^t,y)=g(t)\right\},\\
 Y(t) &=\left\{ y^t\in Y: \inf_{x\in X}G(t,x,y^t)=h(t)\right\}.
\end{align}
Note that inequality $h(t)\leq g(t)$ holds. If $h(t) = g(t)$ the set of saddle points is given by
\begin{equation}
S(t) := X(t)\times Y(t).
\end{equation}
We state now a simplified version of a result from \cite{MR948649} derived from \cite{CS} which gives realistic conditions that allows to differentiate $g(t)$ at $t=0$.  The main difficulty is to obtain conditions which allow to exchange the derivative with respect to $t$ and the inf-sup in \eqref{s2}.  
%%%%%%%%%%%%%%%%%%%%%%%%%%%%%%%%%%%%%%%%%%%%%%%%%%%%%%
\begin{theorem}[Correa and Seeger\cite{DZ}]
\label{CS}
Let $X,Y,G$ and $\varepsilon$ be given as above. Assume that the following conditions hold :
\begin{itemize}
\item[(H1)] $S(t)\neq\emptyset$ for $0\leq t\leq \varepsilon$.
\item[(H2)] The partial derivative $\partial_t G(t,x,y)$ exists for all $(t,x,y)\in [0,\varepsilon]\times X\times Y $.
\item[(H3)] For any sequence $\{t_n\}_{n\in\N}$, with  $t_n\to 0$, there exist  a subsequence $\{t_{n_k}\}_{k\in\N}$ and $x^0\in X(0)$, $x_{n_k}\in X(t_{n_k})$ such that for all $y\in Y(0)$,
\begin{equation*}
\lim_{t\searrow 0, k\to\infty} \partial_t G(t,x_{n_k},y) = \partial_t G(0,x^0,y),
\end{equation*}
\item[(H4)] For any sequence $\{t_n\}_{n\in\N}$, with  $t_n\to 0$, there exist  a subsequence $\{t_{n_k}\}_{k\in\N}$ and $y^0\in Y(0)$, $y_{n_k}\in Y(t_{n_k})$ such that for all $x\in X(0)$,
\begin{equation*}
\lim_{t\searrow 0, k\to\infty} \partial_t G(t,x,y_{n_k}) = \partial_t G(0,x,y^0),
\end{equation*}
\end{itemize}
Then there exists $(x^0,y^0)\in X(0)\times Y(0)$  such that
\begin{align*}
\frac{dg}{dt}(0) &=  \partial_t G(0,x^0,y^0).
\end{align*}
\end{theorem}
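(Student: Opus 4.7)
The plan is to sandwich $\frac{g(t)-g(0)}{t}$ between two expressions that, under (H1)-(H4), both converge to the same value $\partial_t G(0,x^0,y^0)$ for a suitably chosen pair $(x^0,y^0)\in X(0)\times Y(0)$.

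First I would exploit the saddle-point structure guaranteed by (H1): for every $t\in[0,\varepsilon]$ we have $S(t)=X(t)\times Y(t)$, so $g(t)=G(t,x^t,y^t)$ for any $(x^t,y^t)\in X(t)\times Y(t)$, and any $(x^0,y^0)\in X(0)\times Y(0)$ satisfies the analogous identity at $t=0$. The four saddle inequalities $g(t)\leq G(t,x^0,y^t)$, $g(0)\geq G(0,x^0,y^t)$, $g(t)\geq G(t,x^t,y^0)$, $g(0)\leq G(0,x^t,y^0)$, combined and divided by $t>0$, give
\[
\frac{G(t,x^t,y^0)-G(0,x^t,y^0)}{t}\;\leq\;\frac{g(t)-g(0)}{t}\;\leq\;\frac{G(t,x^0,y^t)-G(0,x^0,y^t)}{t}.
\]
Hypothesis (H2) plus the classical mean value theorem turn the outer quantities into $\partial_t G(\eta_t,x^t,y^0)$ and $\partial_t G(\xi_t,x^0,y^t)$ respectively, with $\xi_t,\eta_t\in(0,t)$.

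Next I would coordinate (H3) and (H4) by nested extraction. Given any sequence $t_n\searrow 0$, hypothesis (H3) produces a subsequence $t_{n_k}$, points $x_{n_k}\in X(t_{n_k})$, and a limit $x^0\in X(0)$ with $\partial_t G(s,x_{n_k},y)\to\partial_t G(0,x^0,y)$ jointly as $s\searrow 0$ and $k\to\infty$, for every $y\in Y(0)$. Applying (H4) to this subsequence yields a further sub-subsequence (still denoted $t_{n_k}$), together with $y_{n_k}\in Y(t_{n_k})$ and $y^0\in Y(0)$ such that $\partial_t G(s,x,y_{n_k})\to\partial_t G(0,x,y^0)$ for every $x\in X(0)$. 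Because $S(t_{n_k})$ is the rectangle $X(t_{n_k})\times Y(t_{n_k})$, I may choose $(x^{t_{n_k}},y^{t_{n_k}})=(x_{n_k},y_{n_k})$ in the sandwich above. Specializing the (H3) convergence to $y=y^0$ and the (H4) convergence to $x=x^0$, and noting that $\eta_{t_{n_k}},\xi_{t_{n_k}}\in(0,t_{n_k})\to 0$, both the lower and upper bounds tend to the common value $\partial_t G(0,x^0,y^0)$. A standard subsequence-of-subsequences argument then promotes this to the full one-sided limit, yielding $\frac{dg}{dt}(0)=\partial_t G(0,x^0,y^0)$ for some $(x^0,y^0)\in X(0)\times Y(0)$.

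The main obstacle I anticipate is precisely this coordination: the lower bound naturally selects an $x^0\in X(0)$ while the upper bound selects a $y^0\in Y(0)$, and without nesting the two extractions one would only conclude that $\liminf$ and $\limsup$ of the difference quotient are controlled by $\partial_t G$ evaluated at two a priori different saddle points. The nested extraction is exactly what forces both bounds to land on the same $(x^0,y^0)$, and in doing so proves that the derivative exists (rather than only that subsequential Dini derivatives are bounded). A secondary technical subtlety is that the mean value theorem step tacitly requires $t\mapsto\partial_t G(t,x,y)$ to behave well enough on $[0,\varepsilon]$; if only pointwise existence is available, this step must be replaced by a direct estimate built from the joint-continuity statements in (H3)-(H4), but the overall strategy is unchanged.
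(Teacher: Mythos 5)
You should first note that the paper does not actually prove Theorem \ref{CS}: it is quoted as a known result of Correa and Seeger (via \cite{MR948649}, \cite{CS}), so your attempt can only be measured against the standard literature proof --- whose outline you have correctly reproduced: the saddle-point sandwich for the difference quotient, the mean value theorem via (H2), and the passage to the limit via (H3)--(H4). Two of your side remarks check out: pairing the $x_{n_k}$ from (H3) with the $y_{n_k}$ from (H4) is legitimate precisely because the saddle set is the rectangle $X(t)\times Y(t)$, and your worry about the mean value theorem is unfounded, since (H2) gives differentiability of $t\mapsto G(t,x,y)$ at every point of $[0,\varepsilon]$, hence continuity, which is all the MVT needs on $[0,t]$.

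The genuine gap is your last step. The nested extraction proves: every sequence $t_n\searrow 0$ admits a subsequence along which the difference quotient converges to $\partial_t G(0,x^0,y^0)$ for \emph{some} pair $(x^0,y^0)\in X(0)\times Y(0)$ \emph{depending on the sequence}. The ``subsequence-of-subsequences'' principle you then invoke requires the candidate limit to be a single fixed number; since $\partial_t G(0,\cdot,\cdot)$ need not be constant on $X(0)\times Y(0)$, nothing in your argument rules out that a liminf-realizing sequence and a limsup-realizing sequence produce pairs with different values of $\partial_t G(0,x^0,y^0)$ --- which is exactly the failure of differentiability you are trying to exclude, so the conclusion cannot be waved through. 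The repair, and the way the classical proof proceeds, is to keep the universal quantifiers of (H3)--(H4) rather than specializing them inside a single extraction: apply (H3) to a sequence realizing $\underline d:=\liminf_{t\searrow 0}\,\bigl(g(t)-g(0)\bigr)/t$ to obtain $x^0\in X(0)$ with $\underline d\geq \partial_t G(0,x^0,y)$ for \emph{all} $y\in Y(0)$; apply (H4) to a (possibly different) sequence realizing $\overline d:=\limsup_{t\searrow 0}\,\bigl(g(t)-g(0)\bigr)/t$ to obtain $y^0\in Y(0)$ with $\overline d\leq \partial_t G(0,x,y^0)$ for \emph{all} $x\in X(0)$. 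Taking $y=y^0$ in the first inequality and $x=x^0$ in the second gives $\overline d\leq \partial_t G(0,x^0,y^0)\leq \underline d$, hence $\underline d=\overline d=\partial_t G(0,x^0,y^0)$: existence of $\frac{dg}{dt}(0)$ and the formula follow in one stroke, with no subsequence principle needed. With this replacement for your final paragraph, the rest of your proposal stands.
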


\bibliographystyle{siam}
\bibliography{biblio}

\end{document}